\documentclass[a4paper,numbook,babel,final,envcountsame,11pt]{article}

\usepackage{amssymb}
\usepackage{amsthm}
\usepackage{amsmath}
\usepackage{graphicx}
\usepackage{array,hhline}



\newtheorem{lemma}{Lemma}[section]
\newtheorem{theorem}[lemma]{Theorem}
\newtheorem{proposition}[lemma]{Proposition}

\newtheorem{corollary}[lemma]{Corollary}
\theoremstyle{definition}
\newtheorem{definition}[lemma]{Definition}
\newtheorem{remark}[lemma]{Remark}

\numberwithin{equation}{section}
\numberwithin{figure}{section}

\newcommand{\Xset}{\mathcal{X}}
\newcommand{\Yset}{\mathcal{Y}}
\newcommand{\Zset}{\mathcal{Z}}

\newcommand{\eg}{\emph{e.g.}}

\begin{document}
\date{}
\title{On the intersection points of two plane algebraic curves}
\author{H. Hakopian, D. Voskanyan \\
} \maketitle

\begin{abstract}We  prove that a set $\mathcal X\subset \mathbb{C}^2,\ \#{\mathcal X}=mn,\  m\le n, $ is the set of intersection points of some two plane algebraic curves
of degrees $m$ and $n,$ respectively, if and only
if the following conditions are satisfied:\\
a) Any curve of degree $m+n-3$ containing all but one point of  \indent $\mathcal X$, contains
 all of
$\mathcal X,$\\
b) No curve of degree less than $m$ contains all of $\mathcal X.$

Let us mention that the  conditions a) and b) in the``only if" direction of this result follow from the Ceyley-Bacharach and Noether theorems, respectively.
\end{abstract}

{\bf Keywords:} Plane algebraic curve, intersection point, $n$-poised set, $n$- independent set.

\section{Introduction\label{Sec1}, $n$-independence\label{Sec2}}

Let $\Pi_n$ be the space of all polynomials in two variables and of total
degree less than or equal to $n$. Its dimension is given by
\begin{equation*}\label{eq:dim}
  N := \dim \Pi_n = \binom{n+2}{2}.
\end{equation*}
A plane algebraic curve is the zero set of some bivariate
polynomial. To simplify notation, we shall use the same letter $p$,
say, to denote the polynomial $p$ and the curve given by the
equation $p(x,y)=0$. More precisely, suppose $p$ is a polynomial
without multiple factors. Then the plane curve defined by the
equation $p(x,y)=0$ shall also be denoted by $p$.

So lines, conics, and cubics are equivalent to polynomials of degree
$1$, $2$, and $3$, respectively; a reducible conic is a pair of
lines, and a reducible cubic is a triple of lines, or consists of a
line and an irreducible conic.

The following is a Linear Algebra fact:
\begin{lemma} \label{curve} For any $N-1=(1/2)k(k+3)$ points in the plane there is a curve of degree $k$ passing through them.
\end{lemma}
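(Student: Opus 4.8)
The plan is to reduce the statement to an elementary dimension count in linear algebra. First I would record that the stated equality is exactly the relation between the number of points and the dimension of $\Pi_k$: since $\dim \Pi_k = \binom{k+2}{2} = \frac{1}{2}(k+1)(k+2)$, the prescribed number of points $(1/2)k(k+3) = \binom{k+2}{2}-1$ is precisely one less than $\dim \Pi_k$. Write $N = \dim \Pi_k$ and denote the given points by $P_1,\dots,P_{N-1}$.

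Next I would encode the passage conditions linearly. Expanding an arbitrary $p \in \Pi_k$ in the monomial basis $m_1,\dots,m_N$ as $p = \sum_{j=1}^{N} c_j m_j$, the requirement $p(P_i)=0$ becomes a single homogeneous linear equation $\sum_{j=1}^N m_j(P_i)\, c_j = 0$ in the coefficients $c_1,\dots,c_N$. Imposing all $N-1$ vanishing conditions therefore yields a homogeneous linear system $M\mathbf c = 0$, where $M = \bigl(m_j(P_i)\bigr)$ is an $(N-1)\times N$ matrix.

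The decisive step is then immediate: the system has $N-1$ equations in $N$ unknowns, so $\operatorname{rank} M \le N-1 < N$ and its kernel is nontrivial. Any nonzero solution $\mathbf c$ gives a nonzero polynomial $p \in \Pi_k$ vanishing at all $N-1$ points, and its zero set is the sought curve of degree at most $k$. If a curve of degree exactly $k$ is desired, one may multiply $p$ by a suitable power of a generic linear form, which preserves vanishing at every $P_i$.

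I do not anticipate a genuine obstacle, since the result follows entirely from the fact that a homogeneous system with fewer equations than unknowns admits a nontrivial solution; coincidences among the points only reduce the number of effective conditions and hence enlarge the kernel. The one point meriting care is the distinction between ``degree $k$'' and ``degree at most $k$'': the construction produces a nonzero polynomial of degree $\le k$, whose zero locus is a proper algebraic curve (not all of $\mathbb{C}^2$) precisely because the polynomial is nonzero, and padding by a linear factor recovers exact degree $k$ when that is intended.
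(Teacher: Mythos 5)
Your proof is correct and is exactly the argument the paper has in mind: the paper states this lemma as ``a Linear Algebra fact'' without proof, and your dimension count --- $N-1$ homogeneous vanishing conditions on the $N$-dimensional space $\Pi_k$ force a nontrivial kernel --- is precisely that fact spelled out.
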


Suppose a set of $k$ distinct points is given:
\begin{equation*}\label{eq:knset}{\mathcal X}_k =
\{(x_i,y_i): i=1,2,\ldots,k\}\subset \mathbb{C}^2.\end{equation*}
The problem of finding a polynomial $p \in \Pi_n$ which satisfies
the conditions
\begin{equation}\label{eq:intpr}
  p(x_i,y_i) = c_i, \quad i=1, \ldots, k,
\end{equation}
is called \emph{interpolation problem}.
\begin{definition}
The set of points ${\mathcal X}_k$ is called  \emph{$n$-poised}, if for any data $(c_1,\ldots, c_m)$,
there is a \emph{unique} polynomial $p\in\Pi_n$ satisfying the
conditions \eqref{eq:intpr}.
\end{definition}
\noindent By a Linear Algebra argument a necessary condition for
$n$-poisedness is
\begin{equation*}
  k = \#{\mathcal X}_k = \dim \Pi_n = N .
\end{equation*}
 A polynomial $p\in\Pi_n$ is called
\emph{$n$-fundamental polynomial} of a point
$A\in{\mathcal X},$  if
\begin{equation*}\label{eq:fundpol}
  p(A) = 1 \qquad\text{and}\qquad  p\big\vert_{{\mathcal X}\setminus\{A\}} = 0,
\end{equation*}
where $p\big\vert_{\mathcal X}$ means the restriction of $p$ to
${\mathcal X}.$
We shall denote such a polynomial by $p_{A,{\mathcal X}}^\star.$

\noindent Sometimes we call $n$-fundamental also a polynomial from $\Pi_n$
that just vanishes at all the points of ${\mathcal X}$ but $A,$ since such
a polynomial is a nonzero constant multiple of $p_A^\star.$
A fundamental polynomial can be described as a plane curve containing all but one point of $\Xset.$

Next we consider an important concept of $n$-independence and $n$-dependence
of point sets (see \cite{EGH96}, \cite{HJZ09a} - \cite{HM13}).
\begin{definition}
A set of points ${\mathcal X}$ is called
\emph{$n$-independent}, if each its point
has an $n$-fundamental polynomial. Otherwise, it is called \emph{$n$-dependent}.
\end{definition}
\begin{definition}\label{def:depset}
 A set of points ${\mathcal X}$ is called
\emph{essentially $n$-dependent}, if none of its points has an $n$-fundamental polynomial.
\end{definition}

If a point set ${\mathcal X}$ is $n$-dependent, then for some $A\in{\mathcal X}$, there is
 no $n$-fundamental polynomial, which means
that for any polynomial $p\in\Pi_n$ we have that
\begin{equation*}
  p\big\vert_{{\mathcal X}\setminus\{A\}} = 0
  \quad \implies \quad p(A)=0.
\end{equation*}

Thus a set $\mathcal X$ is essentially $k$-dependent means that any
plane curve of degree $k$ containing all but one point of $\mathcal X$, contains
all of $\mathcal X.$

Since fundamental polynomials are linearly independent we obtain that a
necessary condition for $n$-independence is
\begin{equation*}
  \#{\mathcal X} \le \dim \Pi_n = N.
\end{equation*}

\noindent It is easily seen that $n$-independence of
${\mathcal X}_k$ is equivalent to the \emph{solvability} of the
interpolation problem \eqref{eq:intpr}, meaning that for any data
$\{c_1,\ldots, c_k\}$ there exists a (not necessarily unique)
polynomial $p\in \Pi_n$ satisfying the interpolation conditions
\eqref{eq:intpr}.
In the case $k=N,$ i.e., for a point set ${\mathcal X}_N,$
the $n$-independence means $n$-poisedness.

We have the following
\begin{proposition}[ \cite{HM12}, Lemma 2.2]\label{cor:ind1}
Suppose that the point set ${\mathcal X}$ is
$n$-independent and each point of the set ${\mathcal Y}$ has $n$-fundamental
polynomial with respect to the set ${\mathcal X}\cup {\mathcal
Y}.$ Then the latter point set is
$n$-independent, too.
\end{proposition}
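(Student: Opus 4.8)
The plan is to verify the defining property of $n$-independence pointwise on $\mathcal{X}\cup\mathcal{Y}$, treating the two pieces separately. For a point $B\in\mathcal{Y}$ there is nothing to prove: by hypothesis $B$ already possesses an $n$-fundamental polynomial with respect to $\mathcal{X}\cup\mathcal{Y}$, which is exactly what $n$-independence demands at $B$. Hence the entire content of the statement is to produce, for each $A\in\mathcal{X}$, an $n$-fundamental polynomial of $A$ with respect to the enlarged set $\mathcal{X}\cup\mathcal{Y}$.

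For this I would start from the fundamental polynomial $p_A:=p^\star_{A,\mathcal{X}}\in\Pi_n$ supplied by the $n$-independence of $\mathcal{X}$, so that $p_A(A)=1$ and $p_A$ vanishes on $\mathcal{X}\setminus\{A\}$. The only defect of $p_A$ as a fundamental polynomial for the union is that it need not vanish on $\mathcal{Y}$. To repair this, write $\mathcal{Y}=\{B_1,\dots,B_s\}$ and let $q_j:=p^\star_{B_j,\mathcal{X}\cup\mathcal{Y}}$ be the fundamental polynomials granted by the second hypothesis; each $q_j$ lies in $\Pi_n$, satisfies $q_j(B_k)=\delta_{jk}$, and --- crucially --- vanishes on all of $\mathcal{X}$. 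I then form the corrected polynomial
\[
 \tilde p_A \;=\; p_A \;-\; \sum_{j=1}^{s} p_A(B_j)\,q_j \;\in\;\Pi_n .
\]
Because every $q_j$ vanishes on $\mathcal{X}$, the correction term is identically zero on $\mathcal{X}$, so $\tilde p_A$ retains the values of $p_A$ there: $\tilde p_A(A)=1$ and $\tilde p_A$ still vanishes on $\mathcal{X}\setminus\{A\}$. At a point $B_k\in\mathcal{Y}$ the Kronecker property $q_j(B_k)=\delta_{jk}$ collapses the sum to the single term $p_A(B_k)$, giving $\tilde p_A(B_k)=p_A(B_k)-p_A(B_k)=0$. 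Thus $\tilde p_A$ is the required $n$-fundamental polynomial of $A$ with respect to $\mathcal{X}\cup\mathcal{Y}$.

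The argument is essentially a one-step correction in the spirit of Gauss elimination for fundamental polynomials, and I do not expect a genuine obstacle. The one point that must be used with care, and on which the whole construction hinges, is that the $q_j$ are fundamental with respect to the \emph{full} union $\mathcal{X}\cup\mathcal{Y}$ rather than merely with respect to $\mathcal{Y}$; this is precisely what forces $q_j\big\vert_{\mathcal{X}}=0$ and hence guarantees that subtracting the correction term does not disturb any of the interpolation values already achieved on $\mathcal{X}$. With both families of points handled, every point of $\mathcal{X}\cup\mathcal{Y}$ has an $n$-fundamental polynomial, which is the definition of $n$-independence.
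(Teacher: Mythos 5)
Your proof is correct. The paper itself gives no argument for this proposition (it is quoted from the reference \cite{HM12}, Lemma 2.2), and your construction --- taking $p^\star_{A,\mathcal{X}}$ and subtracting the correction $\sum_{j} p^\star_{A,\mathcal{X}}(B_j)\,p^\star_{B_j,\mathcal{X}\cup\mathcal{Y}}$, which leaves the values on $\mathcal{X}$ untouched precisely because each $p^\star_{B_j,\mathcal{X}\cup\mathcal{Y}}$ vanishes on $\mathcal{X}$ --- is exactly the standard proof of that lemma; the only implicit convention is that $\mathcal{X}$ and $\mathcal{Y}$ are disjoint (harmless, since any common point is already covered by the hypothesis on $\mathcal{Y}$ and can be dropped from $\mathcal{X}$'s case).
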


\begin{corollary}[ \cite{HM12}, Prop. 2.3]\label{cor:ind1}
Suppose that a curve $\sigma_k,$  of degree $k$ contains an $n$-independent point set ${\mathcal X}.$
Suppose also that a set ${\mathcal Y}$ is outside of $\sigma_k$ and is $(n-k)$-independent. Then the set ${\mathcal X}\cup {\mathcal
Y}$ is
$n$-independent.
\end{corollary}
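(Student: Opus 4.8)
The plan is to prove Corollary~\ref{cor:ind1} by reducing it to Proposition~\ref{cor:ind1} (Lemma 2.2). The key observation is that the Proposition says: if $\Xset$ is $n$-independent and every point of $\Yset$ has an $n$-fundamental polynomial with respect to $\Xset\cup\Yset$, then $\Xset\cup\Yset$ is $n$-independent. Since we are already given that $\Xset$ is $n$-independent, the entire task reduces to producing, for each point $B\in\Yset$, a polynomial in $\Pi_n$ that vanishes on $(\Xset\cup\Yset)\setminus\{B\}$ and does not vanish at $B$.

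The crucial idea is to build this fundamental polynomial as a \emph{product} of two curves. Fix $B\in\Yset$. Since $\Yset$ is $(n-k)$-independent, $B$ has an $(n-k)$-fundamental polynomial $q\in\Pi_{n-k}$ relative to $\Yset$; that is, $q(B)=1$ and $q$ vanishes on $\Yset\setminus\{B\}$. I would then form the product $p:=\sigma_k\cdot q$, which lies in $\Pi_n$ since $\deg\sigma_k+\deg q\le k+(n-k)=n$. This product vanishes on all of $\Yset\setminus\{B\}$ because $q$ does, and it vanishes on all of $\Xset$ because $\Xset\subset\sigma_k$ so the factor $\sigma_k$ kills those points. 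Hence $p$ vanishes on $(\Xset\cup\Yset)\setminus\{B\}$.

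It remains to check that $p(B)\neq0$, equivalently that $\sigma_k(B)\neq0$, since $q(B)=1$. This is exactly where the hypothesis that $\Yset$ lies \emph{outside} of $\sigma_k$ is used: $B\in\Yset$ is not on the curve $\sigma_k$, so $\sigma_k(B)\neq0$. Thus $p(B)=\sigma_k(B)\cdot q(B)=\sigma_k(B)\neq0$, and after normalizing by this nonzero constant we obtain a genuine $n$-fundamental polynomial for $B$ with respect to $\Xset\cup\Yset$. Since $B\in\Yset$ was arbitrary, every point of $\Yset$ has such a fundamental polynomial, so the hypotheses of Proposition~\ref{cor:ind1} are met and we conclude that $\Xset\cup\Yset$ is $n$-independent.

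I expect no serious obstacle here; the argument is a clean product construction feeding into the previous Proposition. The only points requiring mild care are the degree bookkeeping ($\deg(\sigma_k q)\le n$) and the precise reading of ``outside of $\sigma_k$'' to guarantee $\sigma_k(B)\ne0$ for every $B\in\Yset$. One should also verify implicitly that $\sigma_k$, as a defining polynomial of the curve, vanishes identically on $\Xset$, which follows from the stated containment $\Xset\subset\sigma_k$. The whole proof is therefore short, with the product $\sigma_k\cdot q$ being the single essential construction.
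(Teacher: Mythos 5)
Your proof is correct and matches the route the paper intends: the paper states this as a corollary of the preceding Proposition (Lemma 2.2 of \cite{HM12}), and the product construction $p=\sigma_k\cdot q$ with $q$ the $(n-k)$-fundamental polynomial of $B\in\mathcal{Y}$ is exactly the standard way to verify its hypotheses, mirroring the product arguments the paper itself uses (\eg\ in Lemma \ref{red} and Proposition \ref{1''}). Nothing further is needed.
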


Below we give a characterization of $n$-dependence of point sets consisting of at most $3n$ points.

\begin{theorem}[\cite{HM12}, Thm.~5.1]\label{thm:3n}
A set $\Xset$ consisting of at most $3n$ points is $n$-dependent if
and only if one of the following holds.

a) $n+2$ points are collinear,

b) $2n+2$ points belong to a (possibly reducible) conic,

c) $\#\Xset = 3n,$ and there exist $\sigma_3\in\Pi_3$ and $\sigma_n\in\Pi_n$
such that $\Xset = \sigma_3 \cap \sigma_n.$
\end{theorem}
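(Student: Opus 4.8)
The plan is to prove the two implications separately: the ``if'' direction by classical intersection theory, and the ``only if'' direction by strong induction on $n$.

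For the ``if'' direction, assume one of a), b), c) holds and exhibit a point of $\Xset$ with no $n$-fundamental polynomial. In case a), if $p\in\Pi_n$ vanishes at the $n+1$ collinear points other than a chosen point $A$ on the same line, then the restriction of $p$ to that line is a univariate polynomial of degree $\le n$ with $n+1$ zeros, hence vanishes identically, so $p(A)=0$ and $A$ has no fundamental polynomial. Case b) is the same B\'ezout argument with a conic in place of the line: a $p\in\Pi_n$ vanishing at $2n+1$ points of an irreducible conic $\gamma$ must contain $\gamma$, since otherwise it would meet $\gamma$ in at most $2n$ points; the reducible conic splits into two lines, and a pigeonhole step reduces it to case a). Case c) is exactly the Cayley--Bacharach theorem: for the complete intersection $\Xset=\sigma_3\cap\sigma_n$ every curve of degree $3+n-3=n$ passing through all but one point of $\Xset$ passes through all of $\Xset$, so no point of $\Xset$ has an $n$-fundamental polynomial.

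For the ``only if'' direction I argue by strong induction on $n$, checking $n\le 2$ directly (for $n\le 2$ condition c) adds nothing new, a $(3,2)$-intersection being six points on a conic and a $(3,1)$-intersection three collinear points). The engine of the induction is the following reduction, read off from Corollary~\ref{cor:ind1} with $k=1$: if $\Xset$ is $n$-dependent and $\ell$ is any line not containing all of $\Xset$, then $\Xset\cap\ell$ carries at most $n+1$ points once a) is excluded and is therefore $n$-independent, so the contrapositive of that corollary forces $\Xset\setminus\ell$ to be $(n-1)$-dependent. Assuming now that a) and b) both fail, I first treat the case that some line $\ell$ carries exactly $n+1$ points: then $\Xset\setminus\ell$ is $(n-1)$-dependent with at most $2n-1<3(n-1)$ points, so the inductive hypothesis forces $n+1$ of its points to be collinear (the size being too small for the conic or complete-intersection alternatives), and together with the $n+1$ points on $\ell$ this produces $2n+2$ points on a reducible conic, contradicting the failure of b). Hence, once a) and b) are excluded, every line meets $\Xset$ in at most $n$ points.

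It remains to show that an $n$-dependent set with at most $3n$ points, at most $n$ points on each line and at most $2n+1$ on each conic, must satisfy c); this is the main obstacle. Here I would remove the points on a line $\ell$ carrying the maximal number $m$ of collinear points. When $m\ge 3$ the residual set $\Xset\setminus\ell$ is $(n-1)$-dependent with at most $3(n-1)$ points; the collinear alternative of the inductive hypothesis is excluded because every line carries at most $n$ points, leaving only either $2n$ points of $\Xset$ on a conic, or the situation $m=3$, $\#\Xset=3n$ with $\Xset\setminus\ell$ a $(3,n-1)$ complete intersection $\tau_3\cap\tau_{n-1}$. In the latter case $\sigma_n:=\ell\cdot\tau_{n-1}$ is a degree-$n$ curve through all of $\Xset$, and for $n\ge 5$ the cubic $\tau_3$ is the \emph{unique} cubic through $\Xset\setminus\ell$, because a form of degree below $\deg\tau_{n-1}$ vanishing on a complete intersection lies in the ideal generated by $\tau_3$; the crux is then to prove, from the $n$-dependence of $\Xset$, that $\tau_3$ meets $\ell$ precisely in the three leftover points, so that $\Xset=\tau_3\cap\sigma_n$ is the required complete intersection. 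Disposing of the residual conic subcase and of the low-collinearity case $m\le 2$, and carrying out the alignment of $\tau_3$ with the leftover points, is the delicate part of the argument: it rests on a converse to Cayley--Bacharach and on dimension counts for the linear systems of cubics and of degree-$n$ curves through $\Xset$, and I expect this bookkeeping, rather than any single step, to be the hardest part of the proof.
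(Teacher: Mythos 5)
First, a point of reference: the paper does not prove Theorem \ref{thm:3n} at all --- it imports it from \cite{HM12} --- so your proposal can only be judged on its own merits, not against an argument in the text. Your ``if'' direction is essentially sound, with one small repair needed: for a reducible conic $\ell_1\ell_2$ carrying $2n+2$ points, pigeonhole only yields $n+1$ points on one line, not the $n+2$ required by case a). In the split $(n+1,n+1)$ (which forces the node $\ell_1\cap\ell_2$ to lie outside $\Xset$) you need a separate two-step argument: a $p\in\Pi_n$ vanishing at all points but one must contain $\ell_2$ because of its $n+1$ zeros there, and the quotient, of degree $n-1$, then vanishes at the $n$ remaining points of $\ell_1$ and hence on all of $\ell_1$. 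This is easy, but it is not a reduction to case a).

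The ``only if'' direction, however, has genuine gaps, which you yourself flag but do not close. (1) Your induction engine is line removal via the contrapositive of Corollary \ref{cor:ind1}, and it only functions when some line carries $m\ge 3$ points of $\Xset$: removing a line with $m\le 2$ points leaves an $(n-1)$-dependent set of up to $3n-2>3(n-1)$ points, outside the reach of the inductive hypothesis. The case $m\le 2$ is not vacuous --- a complete intersection of an irreducible cubic with a generic degree-$n$ curve typically has no three collinear points --- so for precisely the sets that conclusion c) describes, your induction produces nothing, and you offer no alternative argument. (2) Even in the favorable case $m=3$, $\#\Xset=3n$, where the residual set is a complete intersection $\tau_3\cap\tau_{n-1}$, the heart of the matter is to show that $\tau_3$ passes through the three points of $\Xset$ on $\ell$, so that $\Xset=\tau_3\cap(\ell\cdot\tau_{n-1})$; you call this ``the crux'' and appeal to ``a converse to Cayley--Bacharach and dimension counts'' without performing them. (3) The residual subcase of $2n$ points of $\Xset\setminus\ell$ on a conic is likewise deferred. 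Items (1)--(3) together constitute essentially all of the substance of the hard direction, so what you have is a plausible strategy whose most difficult steps are acknowledged but unexecuted, not a proof.
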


Next we bring three corollaries of this result.
\begin{corollary}\label{cor:2n+1}
A set $\Xset$ consisting of at most $2n+1$ points is $n$-dependent if
and only if $n+2$ points are collinear.
\end{corollary}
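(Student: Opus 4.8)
The plan is to derive Corollary \ref{cor:2n+1} directly from Theorem \ref{thm:3n} by showing that, for a set $\Xset$ with $\#\Xset \le 2n+1$, the only clause of the theorem that can possibly apply is clause a). The "if" direction is immediate: if $n+2$ points are collinear, then clause a) of Theorem \ref{thm:3n} holds, so $\Xset$ is $n$-dependent. Thus the real content is the "only if" direction, and the strategy is simply to eliminate clauses b) and c) under the hypothesis $\#\Xset \le 2n+1$.

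Clause c) is eliminated at once: it requires $\#\Xset = 3n$, whereas $3n > 2n+1$ for all $n \ge 2$ (and the case $n=1$ can be checked separately, where $3n = 2n+1 = 3$ but collinearity is forced anyway). So under $\#\Xset \le 2n+1$ clause c) cannot occur. The main work is therefore to show that if $2n+2$ points of $\Xset$ lie on a conic (clause b), then in fact $n+2$ of the points must be collinear, so that clause a) holds. First I would note that clause b) already requires $\#\Xset \ge 2n+2 > 2n+1$, so strictly speaking clause b) cannot hold either when $\#\Xset \le 2n+1$; hence both b) and c) are vacuous and only a) survives. This observation alone completes the proof, and I expect this counting argument to be the entire substance.

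The one point requiring a little care, and the place I expect the only real obstacle, is the reducible-conic subtlety: one must confirm that when the conic in clause b) is reducible (a pair of lines), the statement of Theorem \ref{thm:3n} genuinely needs $2n+2$ distinct points on it, so that the crude count $2n+2 > 2n+1$ is legitimate and we are not secretly invoking a degenerate configuration with fewer points. Since Theorem \ref{thm:3n} is quoted with the explicit cardinality thresholds $n+2$, $2n+2$, and $3n$, these thresholds can be taken at face value, and the comparison $\#\Xset \le 2n+1 < 2n+2 \le 3n$ rules out b) and c) simultaneously. I would present the argument as a short case analysis: assume $\Xset$ is $n$-dependent with $\#\Xset \le 2n+1$, apply Theorem \ref{thm:3n}, and observe that the cardinality bound forces clause a), completing the "only if" direction and hence the equivalence.
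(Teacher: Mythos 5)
Your proof is correct and is essentially the argument the paper intends: the corollary is stated as an immediate consequence of Theorem \ref{thm:3n}, where clauses b) and c) are ruled out by the cardinality bound $\#\Xset \le 2n+1 < 2n+2 \le 3n$ (with your separate check of $n=1$, where clause c) forces collinearity anyway, being a legitimate and easily overlooked detail). The only stylistic remark is that your middle paragraph first proposes deducing collinearity from the conic clause before correctly observing that clause b) is simply vacuous; the final counting argument you settle on is exactly right.
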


\noindent
A generalization of this result allowing for multiple points can be
found in (\cite{H00}, Theorem 9). From Corollary \ref{cor:2n+1} we get immediately the following result of Severi \cite{S}:

\begin{corollary}[Severi, \cite{S}]\label{cor:n+1}
Any set $\Xset$ consisting of at most $n+1$ points is $n$-independent.
\end{corollary}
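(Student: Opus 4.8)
The plan is to deduce this immediately from Corollary \ref{cor:2n+1}. Since $n+1 \le 2n+1$ for every $n \ge 1$, any set $\Xset$ with $\#\Xset \le n+1$ falls under the hypothesis of that corollary, which asserts that such a set is $n$-dependent precisely when it contains $n+2$ collinear points. But a set of at most $n+1$ points cannot contain $n+2$ points at all, and so in particular cannot contain $n+2$ collinear ones. Hence $\Xset$ fails to be $n$-dependent, which is exactly the assertion that it is $n$-independent.

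As an independent check, and in case one prefers a self-contained argument not relying on the machinery behind Corollary \ref{cor:2n+1}, I would also verify the claim by directly exhibiting a fundamental polynomial for each point. Fix $A \in \Xset$. The remaining set $\Xset \setminus \{A\}$ has at most $n$ points, and for each such point $B$ I would pick a line $\ell_B$ passing through $B$ but not through $A$; this is possible because $A \ne B$. The product of these at most $n$ linear forms is a polynomial in $\Pi_n$ that vanishes on $\Xset \setminus \{A\}$ while being nonzero at $A$, so after normalization it serves as the $n$-fundamental polynomial $p_{A,\Xset}^\star$. Since $A$ was arbitrary, every point has a fundamental polynomial and $\Xset$ is $n$-independent.

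There is essentially no obstacle here, as both routes are short. The only point requiring a moment's care in the direct construction is the guarantee that each line $\ell_B$ can be chosen to avoid $A$, but this is immediate, since through any point $B \ne A$ there pass infinitely many lines and all but one of them miss $A$. I would therefore present the one-line deduction from Corollary \ref{cor:2n+1} as the proof proper, optionally remarking that the elementary product-of-lines construction recovers the same conclusion without invoking the classification theorem.
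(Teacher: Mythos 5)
Your primary argument is exactly the paper's own proof: the paper derives this corollary immediately from Corollary \ref{cor:2n+1}, noting that a set of at most $n+1$ points cannot contain $n+2$ collinear points. The deduction is correct, and your optional product-of-lines construction is also a valid (and classical) self-contained alternative, though the paper does not include it.
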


\begin{corollary}\label{cor:3n-1}
A set $\Xset$ consisting of at most $3n-1$ points is $n$-dependent if
and only if one of the following holds.
\vspace{-2mm}
\begin{enumerate}
\setlength{\itemsep}{0mm}
\item
$n+2$ points are collinear,
\item
$2n+2$ points belong to a (possibly reducible) conic.
\end{enumerate}
\end{corollary}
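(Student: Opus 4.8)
The plan is to obtain this result as an immediate consequence of Theorem~\ref{thm:3n}, the essential observation being that condition c) of that theorem is incompatible with the cardinality bound assumed here. Indeed, since $\#\Xset \le 3n-1 < 3n$, the set $\Xset$ in particular consists of at most $3n$ points, so Theorem~\ref{thm:3n} applies to it without modification. The entire argument then amounts to checking which of the three conditions of that theorem can survive under the strict bound $\#\Xset \le 3n-1$.

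For the ``only if'' direction I would argue as follows. Suppose $\Xset$ is $n$-dependent. Then, by Theorem~\ref{thm:3n}, at least one of the conditions a), b), c) holds. But condition c) requires $\#\Xset = 3n$, which contradicts the hypothesis $\#\Xset \le 3n-1$; hence c) is ruled out, and one of a), b) must hold. These are precisely items (i) and (ii) of the corollary. For the ``if'' direction, suppose one of (i), (ii) holds. Since (i) and (ii) coincide with conditions a), b) of Theorem~\ref{thm:3n}, each of which is listed there as a configuration forcing $n$-dependence, and since the theorem applies, we conclude that $\Xset$ is $n$-dependent.

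I do not expect any genuine obstacle here: the result is a specialization of Theorem~\ref{thm:3n} obtained by discarding the single borderline value $\#\Xset = 3n$ at which the ``$\Xset=\sigma_3\cap\sigma_n$'' phenomenon of case c) can occur. The only point that requires attention is the elementary cardinality comparison $3n-1 < 3n$ that excludes case c), so the proof is essentially one line once Theorem~\ref{thm:3n} is invoked.
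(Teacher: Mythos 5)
Your proposal is correct and is exactly how the paper obtains this result: Corollary~\ref{cor:3n-1} is stated as an immediate consequence of Theorem~\ref{thm:3n}, with case c) ruled out because it forces $\#\Xset = 3n$, contradicting $\#\Xset \le 3n-1$. Nothing further is needed.
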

A special case of above result, when $\#\Xset\le 2n+2,$ can be found in (\cite{EGH96}, Prop. 1).

\begin{lemma}\label{red} Assume that $\mathcal X$ is essentially $k$-dependent and  $\sigma_n$ is a curve of degree $n.$
Assume also that the point set $\mathcal Y:=\mathcal X\setminus \sigma_n$ is not empty. Then $\mathcal Y$ is essentially $(k-n)$-dependent.
\end{lemma}
Indeed, assume conversely that a point $A\in {\mathcal Y}$ has a $(k-n)$-fundamental polynomial: $p_{A,\mathcal Y}^\star.$ Then it is easily seen that the polynomial $p:=p_{A,\mathcal Y}^\star\sigma_{n}$ is a $k$-fundamental polynomial of $A$ in the set $\mathcal X,$
which is a contradiction.

Assume that a curve $\sigma_n$ of degree $n$ is reducible, i.e.,
\begin{equation}\label{comp}\sigma_n=\sigma_{n_1}\cdots\sigma_{n_s},\end{equation}
 where the component $\sigma_{n_i}$ has degree $n_i.$

Denote by  ${\mathcal X}_i,\ i=1,\ldots,s,$ the set of points from ${\mathcal X}\cap\sigma_{n_i}$ which do not lay in other components $\sigma_{n_j},\ j\neq i,$ i.e.,
 \begin{equation}\label{comp'}{\mathcal X}_i={\mathcal X}\setminus\big(\bigcup_{{j=1}\atop {j\neq i}}^s\sigma_{n_j}\big).\end{equation}

\noindent We call a component $\sigma_{n_i}$\emph{not empty} with respect to the set ${\mathcal X}$ if ${\mathcal X}_i\neq  \emptyset.$

\begin{lemma}\label{reduc} Assume that $\mathcal X\subset \sigma_n$ is essentially $k$-dependent, where the curve $\sigma_n$ of degree $n$ is reducible,
given by \eqref{comp}. Assume also that all the components are not empty with respect to $\mathcal X.$\\
Then each set ${\mathcal X}_i$ given in \eqref{comp'} is essentially $(k-n+n_i)$-dependent.
\end{lemma}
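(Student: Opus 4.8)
The plan is to deduce this as an immediate instance of Lemma~\ref{red}, the only real work being to identify the correct auxiliary curve to remove. Fix an index $i\in\{1,\ldots,s\}$ and form the product of all components \emph{except} $\sigma_{n_i}$,
\[
\tau_i := \prod_{\substack{j=1\\ j\neq i}}^{s}\sigma_{n_j},
\]
which is a curve of degree $n-n_i=\sum_{j\neq i}n_j$. Since $\sigma_n$ is reducible we have $s\ge 2$, so this product contains at least one factor and $\deg\tau_i=n-n_i\ge 1>0$; in particular $\tau_i$ is a genuine curve to which Lemma~\ref{red} may be applied.

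The key observation is that deleting from $\mathcal X$ the points lying on $\tau_i$ leaves exactly $\mathcal X_i$. Indeed, a point of $\mathcal X$ lies on $\tau_i$ if and only if it lies on $\sigma_{n_j}$ for some $j\neq i$, so
\[
\mathcal X\setminus\tau_i=\mathcal X\setminus\Big(\bigcup_{\substack{j=1\\ j\neq i}}^{s}\sigma_{n_j}\Big)=\mathcal X_i
\]
by the defining relation \eqref{comp'}. Thus $\mathcal X_i$ is precisely the complement of $\mathcal X$ with respect to the ``co-component'' $\tau_i$.

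Now I would apply Lemma~\ref{red} with $\tau_i$ (of degree $n-n_i$) playing the role of $\sigma_n$. Its hypotheses hold: $\mathcal X$ is essentially $k$-dependent by assumption, and the set $\mathcal X\setminus\tau_i=\mathcal X_i$ is nonempty, this nonemptiness being exactly the assumption that the $i$-th component is not empty with respect to $\mathcal X$. The conclusion of Lemma~\ref{red} is then that $\mathcal X_i=\mathcal X\setminus\tau_i$ is essentially $\big(k-(n-n_i)\big)$-dependent, i.e.\ essentially $(k-n+n_i)$-dependent, as claimed. Since $i$ was arbitrary, the statement follows for every component. There is no substantive obstacle here beyond recognizing that $\mathcal X_i$ is the right complement; the two points deserving a moment's care are that $\tau_i$ has positive degree (guaranteed by reducibility) and that the nonemptiness condition needed by Lemma~\ref{red} is supplied verbatim by the hypothesis that all components are not empty.
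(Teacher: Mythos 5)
Your proof is correct and is, at bottom, the same argument as the paper's: the paper proves Lemma~\ref{reduc} directly by taking a supposed $(k-n+n_i)$-fundamental polynomial $p_i$ of a point $A\in\mathcal X_i$ and multiplying it by $\prod_{j\neq i}\sigma_{n_j}$, which is precisely what the proof of Lemma~\ref{red} does once instantiated with your curve $\tau_i$. The only difference is packaging --- you recognize the statement as a formal instance of Lemma~\ref{red} applied to $\tau_i=\prod_{j\neq i}\sigma_{n_j}$ of degree $n-n_i$, using the identification $\mathcal X\setminus\tau_i=\mathcal X_i$, rather than re-running the contradiction inline; this is valid and arguably tidier.
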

Indeed assume that for some $i,\ 1\le s\le s$ the set  ${\mathcal X}_i$ is not essentially $(k-n+n_i)$-dependent, i.e., a point $A\in {\mathcal X}_i$ has a $(k-n+n_i)$-fundamental polynomial: $p_i.$ Then it is easily seen that the polynomial $p:=p_i\prod_{{j=1}\atop {j\neq i}}^s\sigma_{n_j}$ is a $k$-fundamental polynomial of $A$ in the set $\mathcal X,$
which is a contradiction.

\subsection{The completeness of point sets in plane curves}

\begin{definition}\label{def:depset}
Let $\sigma_k$ be a plane curve of degree $k$, without multiple components. Then the point set ${\mathcal X}\subset \sigma_k$ is called
\emph{$n$-complete in} $\sigma_k,$ if the following assertion holds:
$$p\in \Pi_n,\ p\big\vert_{\mathcal X}=0\ \Rightarrow \ p=q\sigma_k,\ q\in \Pi_{n-k}.$$
\end{definition}
The $n$-completeness in the case $k>n$ means  that $p\in \Pi_n,\ p\big\vert_{\mathcal X}=0\ \Rightarrow \ p=0.$ Therefore we have
\begin{lemma} \label{compoised} Let $k>n.$ Then a set of points $\mathcal X \subset \sigma_k$ is $n$-complete in $\sigma_k,$ if and only if $\mathcal X$ contains an $n$-poised subset $\mathcal X_0.$
\end{lemma}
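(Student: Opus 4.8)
The plan is to reduce the statement to elementary linear algebra on the evaluation functionals of $\Pi_n$. Since $k>n$, the observation preceding the lemma shows that $n$-completeness of $\mathcal X$ in $\sigma_k$ is equivalent to the injectivity of the evaluation map, that is, to the assertion that the only $p\in\Pi_n$ with $p\big\vert_{\mathcal X}=0$ is $p=0$ (a factorization $p=q\sigma_k$ is impossible for a nonzero $p\in\Pi_n$ once $\deg\sigma_k=k>n$). Hence it suffices to prove that this injectivity holds if and only if $\mathcal X$ contains an $n$-poised subset $\mathcal X_0$. For a point $A$ I write $\delta_A$ for the evaluation functional $p\mapsto p(A)$ on $\Pi_n$, and I recall that $n$-poisedness of a set $\mathcal X_0$ is equivalent to $\#\mathcal X_0=N$ together with unique solvability of the interpolation problem on $\mathcal X_0$.

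For the ``if'' direction I would argue directly. Suppose $\mathcal X_0\subset\mathcal X$ is $n$-poised and $p\in\Pi_n$ satisfies $p\big\vert_{\mathcal X}=0$. Then in particular $p\big\vert_{\mathcal X_0}=0$, so both $p$ and the zero polynomial solve the interpolation problem on $\mathcal X_0$ with zero data; by the uniqueness part of $n$-poisedness we get $p=0$, and therefore $\mathcal X$ is $n$-complete in $\sigma_k$.

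For the ``only if'' direction I would pass to the dual space of $\Pi_n$. The hypothesis that no nonzero $p\in\Pi_n$ vanishes on all of $\mathcal X$ says exactly that $\bigcap_{A\in\mathcal X}\ker\delta_A=\{0\}$, which is equivalent to the family $\{\delta_A:A\in\mathcal X\}$ spanning the $N$-dimensional dual of $\Pi_n$. In particular $\#\mathcal X\ge N$, and from this spanning family I would extract a basis $\delta_{A_1},\dots,\delta_{A_N}$. Setting $\mathcal X_0=\{A_1,\dots,A_N\}\subset\mathcal X$, the evaluation map $E\colon\Pi_n\to\mathbb C^N$, $E(p)=(p(A_1),\dots,p(A_N))$, has kernel $\bigcap_{i=1}^N\ker\delta_{A_i}=\{0\}$ because these functionals span the dual; being an injective linear map between spaces of the same dimension $N$, it is an isomorphism. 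Thus the interpolation problem on $\mathcal X_0$ is uniquely solvable, so $\mathcal X_0$ is the desired $n$-poised subset.

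The argument is essentially routine once this reformulation is in place, so I do not expect a genuine obstacle. The two points that require care are, first, invoking the hypothesis $k>n$ to replace $n$-completeness by plain injectivity (otherwise the definition only yields $p=q\sigma_k$), and second, getting the direction of the duality right: it is the \emph{spanning} property of $\{\delta_A:A\in\mathcal X\}$, rather than their linear independence, that permits the selection of a basis and hence of an $n$-poised subset.
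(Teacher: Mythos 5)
Your proof is correct and follows essentially the same route as the paper, which states the lemma as an immediate consequence of the observation that for $k>n$ the condition $p=q\sigma_k,\ q\in\Pi_{n-k}$ forces $p=0$, leaving the remaining linear algebra implicit. Your duality argument (common kernel of the evaluation functionals is trivial iff they span $\Pi_n^*$, then extract a basis to get the $n$-poised subset) is a correct and complete way to supply exactly the standard fact the paper takes for granted.
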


Consider the following two linear spaces of polynomials:
\begin{equation*}{{\mathcal P}}_{n,{\mathcal X}}:=\left\{p\in \Pi_n
: p\big\vert_{\mathcal X}=0\right\},\ \ \ {{\mathcal P}}_{n,{\sigma_k}}:=\left\{p\sigma_k: p\in \Pi_{n-k}\right\},
\end{equation*}
where $\Xset$ is a point set and $\sigma_k\in\Pi_k.$
Note that \begin{equation}\label{d}{{\mathcal P}}_{n,{\mathcal X}}\supset{{\mathcal P}}_{n,{\sigma_k}}\ \hbox{if}\ {\mathcal X}\subset \sigma_k.
\end{equation}

Then we have also that \begin{equation}\label{dd}{{\mathcal P}}_{n,{\mathcal X}}={{\mathcal P}}_{n,{\sigma_k}}\iff {\mathcal X}\subset \sigma_k\ \hbox{is
$n$-complete in}\ \sigma_k.\end{equation}

Now, we readily conclude from \eqref{d} and \eqref{dd} that
\begin{equation}\label{d'}\dim{{\mathcal P}}_{n,{\mathcal X}}=\dim{{\mathcal P}}_{n,{\sigma_k}}\iff {\mathcal X}\subset \sigma_k\ \hbox{is
$n$-complete in}\ \sigma_k.\end{equation}

Evidently we have that \begin{equation}\label{dd'}\dim{{\mathcal P}}_{n,{\sigma_k}}=\dim\Pi_{n-k}.\end{equation}

For $\dim{\mathcal P}_{n,{\mathcal X}}$ we have the following well-known (see e.g. \cite{HJZ09a}, \cite{HM12})

\begin{proposition} \label{cor:ind4} Let ${\mathcal X}_0$ be a maximal $n$-independent subset of ${\mathcal X},$ i.e.,  ${\mathcal X}_0$ is $n$-independent and
${\mathcal X}_0\cup \{A\}$ is $n$-dependent for any $A\in {\mathcal X}\setminus {\mathcal X}_0.$ Then we have that
\begin{equation}\label{eq:theta2}\dim{{\mathcal P}}_{n,{\mathcal X}}=\dim {{\mathcal P}}_{n,{\mathcal X}_0}=\dim\Pi_n-\#\mathcal X_0.
\end{equation}
\end{proposition}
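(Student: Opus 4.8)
The plan is to prove the two equalities separately. The right-hand equality, $\dim\mathcal{P}_{n,\mathcal{X}_0}=\dim\Pi_n-\#\mathcal{X}_0$, is a direct rank--nullity computation, while the left-hand equality, $\dim\mathcal{P}_{n,\mathcal{X}}=\dim\mathcal{P}_{n,\mathcal{X}_0}$, is where the maximality hypothesis does the real work.

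For the right-hand equality I would consider the evaluation map $L\colon\Pi_n\to\mathbb{C}^{\#\mathcal{X}_0}$ sending $p$ to the tuple $\bigl(p(A)\bigr)_{A\in\mathcal{X}_0}$. Its kernel is by definition $\mathcal{P}_{n,\mathcal{X}_0}$. Since $\mathcal{X}_0$ is $n$-independent, the interpolation problem on $\mathcal{X}_0$ is solvable for arbitrary data, which is precisely the statement that $L$ is surjective; hence $\operatorname{rank}L=\#\mathcal{X}_0$, and rank--nullity gives $\dim\mathcal{P}_{n,\mathcal{X}_0}=\dim\Pi_n-\#\mathcal{X}_0$.

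For the left-hand equality, one inclusion is immediate: because $\mathcal{X}_0\subseteq\mathcal{X}$, vanishing on $\mathcal{X}$ is a stronger requirement than vanishing on $\mathcal{X}_0$, so $\mathcal{P}_{n,\mathcal{X}}\subseteq\mathcal{P}_{n,\mathcal{X}_0}$ and in particular $\dim\mathcal{P}_{n,\mathcal{X}}\le\dim\mathcal{P}_{n,\mathcal{X}_0}$. The crux is the reverse inclusion, namely that every $p\in\Pi_n$ vanishing on $\mathcal{X}_0$ in fact vanishes on all of $\mathcal{X}$. Here I would fix an arbitrary $A\in\mathcal{X}\setminus\mathcal{X}_0$ and invoke maximality: the set $\mathcal{X}_0\cup\{A\}$ is $n$-dependent while $\mathcal{X}_0$ is $n$-independent. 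Applying the preservation proposition above with $\mathcal{Y}=\{A\}$ in contrapositive form, the $n$-dependence of $\mathcal{X}_0\cup\{A\}$ forbids $A$ from possessing an $n$-fundamental polynomial with respect to $\mathcal{X}_0\cup\{A\}$ (otherwise that proposition would force $\mathcal{X}_0\cup\{A\}$ to be $n$-independent). By the definition of fundamental polynomial, the absence of an $n$-fundamental polynomial of $A$ with respect to $\mathcal{X}_0\cup\{A\}$ says exactly that every $p\in\Pi_n$ with $p\big\vert_{\mathcal{X}_0}=0$ also satisfies $p(A)=0$.

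Running this argument over all $A\in\mathcal{X}\setminus\mathcal{X}_0$ shows that any polynomial in $\Pi_n$ vanishing on $\mathcal{X}_0$ vanishes on the whole of $\mathcal{X}$, giving $\mathcal{P}_{n,\mathcal{X}_0}\subseteq\mathcal{P}_{n,\mathcal{X}}$ and hence equality of the two spaces; combining this with the right-hand equality completes the proof. I expect the reverse inclusion to be the main obstacle, and the delicate point is to extract from the single statement ``$\mathcal{X}_0\cup\{A\}$ is $n$-dependent'' the precise conclusion that it is $A$, rather than some point of $\mathcal{X}_0$, that lacks a fundamental polynomial. This is exactly where the $n$-independence of $\mathcal{X}_0$ together with the preservation proposition is needed, since the latter guarantees that every point of $\mathcal{X}_0$ retains a fundamental polynomial over the enlarged set, isolating $A$ as the sole obstruction.
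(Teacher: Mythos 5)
Your proof is correct. There is, however, nothing in the paper to compare it against: the paper states this proposition without proof, quoting it as well known with references to the literature (\cite{HJZ09a}, \cite{HM12}). Your argument is the standard one and is fully valid using only facts the paper provides: the equality $\dim{\mathcal P}_{n,{\mathcal X}_0}=\dim\Pi_n-\#{\mathcal X}_0$ follows by rank--nullity from the surjectivity of the evaluation map, which the paper itself records as the equivalence of $n$-independence with solvability of the interpolation problem; and the equality $\dim{\mathcal P}_{n,{\mathcal X}}=\dim{\mathcal P}_{n,{\mathcal X}_0}$ follows from maximality. You also correctly identified and resolved the one delicate point, namely that $n$-dependence of ${\mathcal X}_0\cup\{A\}$ by itself only says \emph{some} point lacks an $n$-fundamental polynomial: invoking the paper's preservation proposition ([\cite{HM12}, Lemma 2.2]) in contrapositive form pins the failure on $A$ itself, since every point of the $n$-independent set ${\mathcal X}_0$ would otherwise allow that proposition to force independence of the enlarged set. (One could alternatively inline that proposition's short proof: if $p\in\Pi_n$ vanished on ${\mathcal X}_0$ with $p(A)=1$, then for each $B\in{\mathcal X}_0$ the polynomial $p^\star_{B,{\mathcal X}_0}-p^\star_{B,{\mathcal X}_0}(A)\,p$ is fundamental for $B$ in ${\mathcal X}_0\cup\{A\}$, making that set $n$-independent, a contradiction; but citing it as you do is equally legitimate.)
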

Set $$d(k,n):=\dim\Pi_n-\dim\Pi_{n-k}.$$
It is easily seen that $d(k,n)=(n+1)+n+\cdots+(n-k+2)=\frac{1}{2}k(2n-k+3),$ if $k\le n+2.$

Finally, in view of \eqref{dd}-\eqref{dd'}, we get the following simple criterium for the completeness:
\begin{proposition}[\eg, \cite{R11}, Prop.~3.1]\label{prop:complete}
Let $k\le n+2$ and $\sigma_k$ be a plane curve of degree $k.$ Then the point set ${\mathcal X}\subset \sigma_k$ is $n$-complete in $\sigma_k,$ if and only if
${\mathcal X}$ contains an $n$-independent subset ${\mathcal X}_0\subset{\mathcal X}$ with $\#{\mathcal X}_0=d(k,n).$
\end{proposition}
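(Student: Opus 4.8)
The plan is to reduce the statement to a dimension count, combining Proposition~\ref{cor:ind4} with the equivalences \eqref{d}--\eqref{dd'}. First I would fix a maximal $n$-independent subset $\Xset_0\subseteq\Xset$ and set $m:=\#\Xset_0$. By Proposition~\ref{cor:ind4} this number is the same for every maximal $n$-independent subset of $\Xset$ and satisfies $\dim{\mathcal P}_{n,\Xset}=\dim\Pi_n-m$. The whole assertion will follow once I prove that $\Xset$ is $n$-complete in $\sigma_k$ if and only if $m=d(k,n)$, together with the a priori bound $m\le d(k,n)$.

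For the reformulation of completeness, I would invoke \eqref{d'} and \eqref{dd'}: the set $\Xset\subset\sigma_k$ is $n$-complete in $\sigma_k$ exactly when $\dim{\mathcal P}_{n,\Xset}=\dim{\mathcal P}_{n,\sigma_k}=\dim\Pi_{n-k}$. Substituting $\dim{\mathcal P}_{n,\Xset}=\dim\Pi_n-m$ shows that $n$-completeness is equivalent to $m=\dim\Pi_n-\dim\Pi_{n-k}=d(k,n)$.

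Next I would record the a priori bound. Since $\Xset\subset\sigma_k$, the inclusion \eqref{d} gives ${\mathcal P}_{n,\Xset}\supset{\mathcal P}_{n,\sigma_k}$, whence $\dim{\mathcal P}_{n,\Xset}\ge\dim\Pi_{n-k}$ by \eqref{dd'}, and therefore $m\le d(k,n)$ for \emph{any} point set contained in $\sigma_k$. Here the hypothesis $k\le n+2$ ensures $d(k,n)=\tfrac12 k(2n-k+3)\le N$, so that a subset of this cardinality is admissible at all.

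Finally I would translate the equality $m=d(k,n)$ into the subset condition of the statement. If $\Xset$ is $n$-complete, then $m=d(k,n)$ and the maximal subset $\Xset_0$ is itself an $n$-independent subset of $\Xset$ with $\#\Xset_0=d(k,n)$. Conversely, if $\Xset$ contains an $n$-independent subset of cardinality $d(k,n)$, I extend it to a maximal $n$-independent subset, of size $m\ge d(k,n)$; together with $m\le d(k,n)$ this forces $m=d(k,n)$, and completeness follows from the reformulation above. The only step that needs a little care—rather than a genuine obstacle—is precisely this last one: it is the a priori bound that upgrades ``$\Xset$ contains a subset of size $d(k,n)$'' to ``the maximal $n$-independent subset of $\Xset$ has size exactly $d(k,n)$.'' Everything else is a direct substitution into the cited identities.
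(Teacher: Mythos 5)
Your proof is correct and takes essentially the same route as the paper, which obtains the proposition ``in view of \eqref{dd}--\eqref{dd'}'' together with Proposition \ref{cor:ind4}: $n$-completeness is equivalent, by the dimension identities, to the maximal $n$-independent subset of $\Xset$ having exactly $d(k,n)$ points. Your write-up simply makes explicit the dimension count and the a priori bound $\#\Xset_0\le d(k,n)$ coming from \eqref{d} that the paper leaves implicit.
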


Note that in the cases $k=n+1, n+2,$ we have that $d(k,n)=\dim\Pi_k,$ and Proposition follows from Lemma \ref{compoised}.

\begin{theorem}[Ceyley-Bacharach] \label{CB} Suppose that a set $\mathcal X,\ \#{\mathcal X}=mn,$ is the set of intersection points of some two plane curves $\sigma_m$ and
$\sigma_n,$ of degrees $m$ and $n,$ respectively: $\mathcal X =\sigma_m\cap \sigma_n.$ Then we have that

a)  the set $\mathcal X$ is  essentially $\kappa$-dependent;

b)  the set $\mathcal X$ is $(\kappa+1)$-independent;

c)  for any point $A\in \mathcal X$ the point set $\mathcal X\setminus \{A\}$ is $\kappa$-independent.
\end{theorem}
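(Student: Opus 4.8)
The plan is to reduce everything to Max Noether's fundamental ($AF+BG$) theorem together with a single residuation argument; recall $\kappa=m+n-3$. Since $\#\mathcal{X}=mn$ attains the B\'ezout bound, the curves $\sigma_m,\sigma_n$ share no common component and meet transversally (each intersection point is simple), which is exactly the situation in which Noether's theorem applies: a polynomial of degree $d$ vanishes on $\mathcal{X}$ if and only if it lies in the ideal $(\sigma_m,\sigma_n)$, i.e.\ can be written $A\sigma_m+B\sigma_n$ with $\deg A\le d-m,\ \deg B\le d-n$. The first step is to compute $\dim\mathcal{P}_{d,\mathcal{X}}$. The surjection $(A,B)\mapsto A\sigma_m+B\sigma_n$ from $\Pi_{d-m}\oplus\Pi_{d-n}$ onto $\mathcal{P}_{d,\mathcal{X}}$ has, by coprimality of $\sigma_m,\sigma_n$ and unique factorization, kernel $\{(C\sigma_n,-C\sigma_m):C\in\Pi_{d-m-n}\}$, whence $\dim\mathcal{P}_{d,\mathcal{X}}=\dim\Pi_{d-m}+\dim\Pi_{d-n}-\dim\Pi_{d-m-n}$.

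A routine substitution then settles (b) and the counting half of (a). Using $\dim\Pi_j=\binom{j+2}{2}$ (and $\dim\Pi_j=0$ for $j<0$), the choice $d=m+n-2$ gives $\dim\mathcal{P}_{m+n-2,\mathcal{X}}=\binom{m}{2}+\binom{n}{2}$, so $\mathcal{X}$ imposes $\dim\Pi_{m+n-2}-\dim\mathcal{P}_{m+n-2,\mathcal{X}}=mn$ conditions on $\Pi_{m+n-2}$; by Proposition~\ref{cor:ind4} a maximal $(m+n-2)$-independent subset then has $mn=\#\mathcal{X}$ points, i.e.\ $\mathcal{X}$ is $(m+n-2)$-independent, which is (b). The choice $d=\kappa=m+n-3$ gives exactly $mn-1$ conditions, that is $\dim\mathcal{P}_{\kappa,\mathcal{X}}=\dim\Pi_{\kappa}-(mn-1)$.

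The crux is to upgrade this single deficiency into the assertion that \emph{no} point of $\mathcal{X}$ carries a $\kappa$-fundamental polynomial. Suppose to the contrary that some $A\in\mathcal{X}$ has one: $F\in\Pi_{\kappa}$ with $F\big\vert_{\mathcal{X}\setminus\{A\}}=0$ and $F(A)\ne0$. Pick a line $\ell$ through $A$ that is generic in the sense that it is not a component of $\sigma_m$ or $\sigma_n$, meets each of them transversally and in distinct points, and meets $\mathcal{X}$ only at $A$ (possible, $\mathcal{X}$ being finite). Then $\ell F$ has degree $m+n-2$ and vanishes on all of $\mathcal{X}$, so Noether yields $\ell F=A'\sigma_m+B'\sigma_n$ with $\deg A'\le n-2,\ \deg B'\le m-2$. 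Restricting to $\ell$ annihilates the left-hand side, giving $A'\big\vert_\ell\,\sigma_m\big\vert_\ell=-B'\big\vert_\ell\,\sigma_n\big\vert_\ell$ as univariate polynomials along $\ell$. Now $\sigma_m\big\vert_\ell$ and $\sigma_n\big\vert_\ell$ share only the simple root at $A$ (any further common point of $\ell$ would lie in $\sigma_m\cap\sigma_n\cap\ell=\mathcal{X}\cap\ell=\{A\}$), so after removing that common linear factor their cofactors are coprime of degrees $m-1$ and $n-1$. A divisibility count then forces $A'\big\vert_\ell\equiv0$ and $B'\big\vert_\ell\equiv0$, hence $\ell\mid A'$ and $\ell\mid B'$; cancelling $\ell$ produces $F=\widetilde A\,\sigma_m+\widetilde B\,\sigma_n\in(\sigma_m,\sigma_n)$, so $F$ vanishes on all of $\mathcal{X}$ and in particular $F(A)=0$, a contradiction. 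This establishes (a).

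Finally (c) follows by combining (a) with the count. Essential $\kappa$-dependence says precisely that $\mathcal{P}_{\kappa,\mathcal{X}\setminus\{A\}}=\mathcal{P}_{\kappa,\mathcal{X}}$ for every $A\in\mathcal{X}$, so $\dim\mathcal{P}_{\kappa,\mathcal{X}\setminus\{A\}}=\dim\Pi_\kappa-(mn-1)$; thus the $mn-1$ points of $\mathcal{X}\setminus\{A\}$ impose independent conditions on $\Pi_\kappa$, and by Proposition~\ref{cor:ind4} the set $\mathcal{X}\setminus\{A\}$ is $\kappa$-independent. I expect the residuation step of the third paragraph — showing that the lone syzygy is ``visible'' at \emph{every} point rather than merely somewhere — to be the main obstacle: the selection of a sufficiently generic $\ell$ and the coprimality bookkeeping on $\ell$ are where the genuine work lies, the dimension counts being purely formal once Noether's theorem is in hand.
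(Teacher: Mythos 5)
Your proposal is correct, but there is nothing in the paper to compare it against: the paper states Theorem~\ref{CB} as the classical Cayley--Bacharach theorem and gives \emph{no proof} of it, just as it quotes Noether's theorem (Theorem~\ref{N}) without proof. What you have done is reduce the first quoted classical result to the second, which is the standard classical route and is carried out soundly here. Your dimension count via the exact sequence $\Pi_{d-m}\oplus\Pi_{d-n}\to\mathcal{P}_{d,\mathcal{X}}$ (kernel the Koszul syzygy $(C\sigma_n,-C\sigma_m)$) is right, and the two substitutions $d=m+n-2$ and $d=\kappa$ do give $mn$ and $mn-1$ imposed conditions, which together with Proposition~\ref{cor:ind4} yield parts b) and c) exactly as you say; note that c) needs a) (to identify $\mathcal{P}_{\kappa,\mathcal{X}\setminus\{A\}}$ with $\mathcal{P}_{\kappa,\mathcal{X}}$), and you use it in that order. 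The residuation argument for a) is also correct: the only genericity requirements on $\ell$ that are actually used are that $\ell\cap\mathcal{X}=\{A\}$, that $A$ is a simple root of $\sigma_m\big\vert_\ell$ and of $\sigma_n\big\vert_\ell$ (so the common linear factor can be cancelled once and the cofactors $g,h$ are coprime), and that $\sigma_n\big\vert_\ell$ has degree exactly $n$, so that $\deg h=n-1$ strictly exceeds the degree bound $n-2$ on $A'\big\vert_\ell$; all of these exclude only finitely many lines through $A$, since $A$ is a transversal (hence smooth) point of both curves --- a fact you correctly extract from $\#\mathcal{X}=mn$ and B\'ezout. One presentational caveat: since the paper treats both Theorem~\ref{CB} and Theorem~\ref{N} as black boxes of comparable depth, your argument should be advertised as a derivation of Cayley--Bacharach \emph{from} Noether's $AF+BG$ theorem rather than a proof from first principles; as such a derivation it is complete.
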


\begin{theorem}[Noether] \label{N} Suppose that a set $\mathcal X,\ \#{\mathcal X}=mn,$ is the set of intersection points of some two plane curves $\sigma_m$ and
$\sigma_n,$ of degrees $m$ and $n,$ respectively: $\mathcal X =\sigma_m\cap \sigma_n.$ Then for any polynomial $p_k\in\Pi_k,$ vanishing on $\Xset,$ we have that
\begin{equation}\label{NT}p_k=A_{k-m} \sigma_m+B_{k-n} \sigma_n,\end{equation}
where $A_{k-m}\in\Pi_{k-m}$ and $B_{k-n}\in\Pi_{k-n}.$
\end{theorem}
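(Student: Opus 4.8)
The plan is to compare, for each $k$, the space ${\mathcal P}_{k,\Xset}=\{p\in\Pi_k:p\big\vert_{\Xset}=0\}$ with the \emph{Noether space}
$$\Nset_k:=\{A\sigma_m+B\sigma_n:\ A\in\Pi_{k-m},\ B\in\Pi_{k-n}\}$$
(using $\Pi_j=\{0\}$ for $j<0$). Because $\sigma_m$ and $\sigma_n$ vanish on $\Xset$ we have $\Nset_k\subseteq{\mathcal P}_{k,\Xset}$, and assertion \eqref{NT} is exactly the opposite inclusion; so it suffices to prove $\Nset_k={\mathcal P}_{k,\Xset}$. I would do this by an exact dimension count in the range $k\ge m+n-2$, and then push down to all smaller $k$ by a downward induction.

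First I would extract two facts from $\#\Xset=mn$. Two curves with a common component meet in infinitely many points, so $\sigma_m,\sigma_n$ are coprime; and as they meet in exactly $mn=\deg\sigma_m\cdot\deg\sigma_n$ distinct points, Bezout's theorem forces all intersections to be simple and finite, so $\sigma_m$ and $\sigma_n$ share no point at infinity and their leading forms $\hat\sigma_m,\hat\sigma_n$ (the top-degree homogeneous parts) are coprime. Coprimality of $\sigma_m,\sigma_n$ gives the exact sequence
$$0\to\Pi_{k-m-n}\xrightarrow{\ C\mapsto(\sigma_nC,\,-\sigma_mC)\ }\Pi_{k-m}\oplus\Pi_{k-n}\xrightarrow{\ (A,B)\mapsto A\sigma_m+B\sigma_n\ }\Nset_k\to0,$$
whence $\dim\Nset_k=\dim\Pi_{k-m}+\dim\Pi_{k-n}-\dim\Pi_{k-m-n}$.

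For $k\ge m+n-2$, Theorem~\ref{CB}(b) says $\Xset$ is $(\kappa+1)$-independent with $\kappa=m+n-3$, and since a $k$-fundamental polynomial also serves at degree $k+1$, the set $\Xset$ is $k$-independent for all such $k$; hence its maximal $k$-independent subset is $\Xset$ itself and Proposition~\ref{cor:ind4} gives $\dim{\mathcal P}_{k,\Xset}=\dim\Pi_k-mn$. On the other hand, when $k\ge m+n-2$ each index $k-m,\,k-n,\,k-m-n$ is $\ge-2$, where the identity $\dim\Pi_j=\binom{j+2}{2}$ still holds, and a routine second-difference computation yields $\dim\Pi_{k-m}+\dim\Pi_{k-n}-\dim\Pi_{k-m-n}=\binom{k+2}{2}-mn$. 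Thus $\dim\Nset_k=\dim{\mathcal P}_{k,\Xset}$, and together with $\Nset_k\subseteq{\mathcal P}_{k,\Xset}$ this gives $\Nset_k={\mathcal P}_{k,\Xset}$, i.e. \eqref{NT}, for every $k\ge m+n-2$.

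To reach the remaining degrees I would induct downward: assuming \eqref{NT} at degree $k+1$, take $p_k\in\Pi_k$ vanishing on $\Xset$ and write $p_k=A\sigma_m+B\sigma_n$ with $A\in\Pi_{k+1-m}$, $B\in\Pi_{k+1-n}$. The homogeneous part of degree $k+1$ vanishes on the left (as $\deg p_k\le k$), so the top forms $\hat A,\hat B$ of $A,B$ satisfy $\hat A\,\hat\sigma_m+\hat B\,\hat\sigma_n=0$; coprimality of $\hat\sigma_m,\hat\sigma_n$ then gives a homogeneous $C$ with $\hat A=\hat\sigma_nC$ and $\hat B=-\hat\sigma_mC$, and passing from $(A,B)$ to $(A-\sigma_nC,\,B+\sigma_mC)$ leaves $p_k$ unchanged while killing the degree-$(k+1)$ parts, so the new coefficients lie in $\Pi_{k-m}$ and $\Pi_{k-n}$. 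This settles \eqref{NT} at degree $k$, in particular the case $k<m$, where it reads $p_k=0$ and yields condition b) of the abstract. I expect the genuine difficulty to be precisely this small-$k$ regime, where $\Xset$ is $k$-dependent and the dimension count no longer determines $\dim{\mathcal P}_{k,\Xset}$; the device that resolves it is the coprimality of the leading forms $\hat\sigma_m,\hat\sigma_n$ coming from the exact Bezout count, which makes the top-degree cancellation in the inductive step reversible without increasing degrees.
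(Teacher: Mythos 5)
Your argument is essentially correct, but there is an important structural point: the paper contains \emph{no proof} of Theorem~\ref{N} at all --- it is quoted as Noether's classical theorem, exactly as Theorem~\ref{CB} is quoted as the Cayley--Bacharach theorem, and both are then used as black boxes (Theorem~\ref{N} only to deduce Corollary~\ref{cN}). So what you have produced is not a variant of the paper's proof but a derivation of one quoted classical theorem from the other. The mathematics checks out: coprimality of $\sigma_m,\sigma_n$ (forced by finiteness of $\Xset$) gives the exact sequence and hence $\dim\Nset_k=\dim\Pi_{k-m}+\dim\Pi_{k-n}-\dim\Pi_{k-m-n}$; Theorem~\ref{CB}, b) plus monotonicity of independence in the degree gives, via Proposition~\ref{cor:ind4}, that $\dim{\mathcal P}_{k,\Xset}=\dim\Pi_k-mn$ for $k\ge m+n-2$; and the second-difference identity $\dim\Pi_{k-m}+\dim\Pi_{k-n}-\dim\Pi_{k-m-n}=\dim\Pi_k-mn$ is valid in that range, so $\Nset_k={\mathcal P}_{k,\Xset}$ there. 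Your downward induction is also sound, and its key device is genuine: since the $mn$ distinct affine points exhaust the Bezout number, there are no intersections at infinity, hence the leading forms $\hat\sigma_m,\hat\sigma_n$ are coprime, which is exactly what allows the degree-$(k+1)$ parts to be stripped off without leaving $\Pi_{k-m}\oplus\Pi_{k-n}$. Two caveats you should make explicit: (i) within the paper's logic the argument is admissible, since Theorem~\ref{CB} precedes Theorem~\ref{N}, but it is not an independent proof of Noether's theorem --- classically Cayley--Bacharach is deduced \emph{from} Noether (or both from the Koszul resolution), so your argument really establishes that the two quoted theorems are equivalent modulo elementary linear algebra, and it would be circular if CB, b) were itself proved via Noether; (ii) in the induction step, $\hat A$ and $\hat B$ must be defined as the homogeneous components of degrees exactly $k+1-m$ and $k+1-n$ (possibly zero), not as ``the top forms of $A,B$,'' since $A$ or $B$ may have strictly smaller degree; with that reading the cancellation step, including the degenerate cases $C=0$ and $k+1-m-n<0$, is exactly right.
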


\begin{corollary} \label{cN} Suppose that a set $\mathcal X,\ \#{\mathcal X}=mn,\ m\le n,$ is the set of intersection points of some two plane curves $\sigma_m$ and
$\sigma_n,$ of degrees $m$ and $n,$ respectively: $\mathcal X =\sigma_m\cap \sigma_n.$ Then no curve of degree less than $m$ contains all of $\mathcal X.$
\end{corollary}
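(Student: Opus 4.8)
The plan is to derive the result directly from Noether's theorem (Theorem \ref{N}) by a short contradiction argument. Suppose, toward a contradiction, that some curve of degree $k$ with $k<m$ contains all of $\mathcal X$; that is, there is a nonzero polynomial $p_k\in\Pi_k$ with $p_k\big\vert_{\mathcal X}=0$. Since $\mathcal X=\sigma_m\cap\sigma_n$, the hypotheses of Theorem \ref{N} are met, so I may apply it to $p_k$.

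First I would invoke Theorem \ref{N} to obtain the representation $p_k=A_{k-m}\sigma_m+B_{k-n}\sigma_n$, where $A_{k-m}\in\Pi_{k-m}$ and $B_{k-n}\in\Pi_{k-n}$. Next I would examine the two degree indices appearing here. Because $k<m\le n$, both differences satisfy $k-m\le -1$ and $k-n\le -1$, that is, they are strictly negative. Under the standard convention that $\Pi_j=\{0\}$ for $j<0$, the only admissible choices are $A_{k-m}=0$ and $B_{k-n}=0$, and hence $p_k=0$.

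This contradicts the assumption that $p_k$ defines a curve, since a plane curve is by definition the zero set of a \emph{nonzero} polynomial (as fixed in Section~\ref{Sec1}). Consequently, no curve of degree $k<m$ can contain all of $\mathcal X$, which is exactly the assertion of the corollary.

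The argument is essentially immediate once Theorem \ref{N} is in hand, so there is no genuine obstacle; the only point requiring care is the bookkeeping of degrees. One must use the \emph{strict} inequality $k<m$ to guarantee $k-m<0$, and, through $m\le n$, also $k-n<0$, so that both polynomial spaces in Noether's representation collapse to $\{0\}$. With this convention in force, the conclusion $p_k=0$ follows with no further computation.
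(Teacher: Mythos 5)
Your proof is correct and follows exactly the paper's own argument: invoke Noether's theorem to write $p_k=A_{k-m}\sigma_m+B_{k-n}\sigma_n$ and conclude $p_k=0$, a contradiction. You simply spell out the degree bookkeeping (that $\Pi_{k-m}$ and $\Pi_{k-n}$ are trivial when $k<m\le n$) which the paper leaves implicit.
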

Indeed, suppose conversely that a curve $p$ of degree less than $m$ contains all of $\mathcal X.$
Then we get from \eqref{NT} that $p=0,$ which is a contradiction.

\section{Main results}

 Throughout this section let us set $$\kappa:=\kappa(m,n):=m+n-3.$$

\begin{theorem}\label{main}
A set $\mathcal X$ with $\#{\mathcal X}=mn,\ m\le n,$ is the set of intersection points of some two plane curves
of degrees $m$ and $n,$ respectively, if and only
if the following conditions are satisfied:\\
a) Any plane curve of degree $\kappa$ containing all but one point of $\mathcal X$, contains
all of $\mathcal X,$\\
b) No curve of degree less than $m$ contains all of $\mathcal X.$
\end{theorem}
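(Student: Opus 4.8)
The ``only if'' direction is immediate from the results already collected: condition a) is precisely the statement that $\mathcal X$ is essentially $\kappa$-dependent, which is part a) of Theorem~\ref{CB}, while condition b) is Corollary~\ref{cN}. So the whole content is the ``if'' direction, and my plan is to construct the two curves explicitly in three stages: first a curve $\sigma_m$ of degree $m$ through $\mathcal X$; then, after noting that $\sigma_m$ is forced to be reduced, a curve $\sigma_n$ of degree $n$ through $\mathcal X$ sharing no component with $\sigma_m$; and finally an application of B\'ezout to conclude $\mathcal X=\sigma_m\cap\sigma_n$.

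For the first stage I argue by contradiction. If no nonzero $p\in\Pi_m$ vanishes on $\mathcal X$, then by Proposition~\ref{cor:ind4} the maximal $m$-independent subset of $\mathcal X$ has the full size $\dim\Pi_m$, i.e.\ $\mathcal X$ contains an $m$-poised subset $\mathcal X_1$ with $\#\mathcal X_1=\binom{m+2}{2}$. Put $\mathcal Z:=\mathcal X\setminus\mathcal X_1$; a short computation gives $\#\mathcal Z=mn-\binom{m+2}{2}\le\binom{n-1}{2}-1=\dim\Pi_{n-3}-1$, the slack being $\tfrac12(n-m-1)(n-m-2)\ge 0$, so by Lemma~\ref{curve} there is a curve $q$ of degree $n-3$ through $\mathcal Z$. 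For $B\in\mathcal X_1$ the product $p_{B,\mathcal X_1}^\star\, q$ has degree $m+(n-3)=\kappa$, vanishes on $(\mathcal X_1\setminus\{B\})\cup\mathcal Z=\mathcal X\setminus\{B\}$, and equals $q(B)$ at $B$; so as soon as some admissible $q$ and some $B$ give $q(B)\ne 0$, the point $B$ has a $\kappa$-fundamental polynomial, contradicting a). The only way to avoid this is that every degree-$(n-3)$ curve through $\mathcal Z$ already contains all of $\mathcal X_1$, which I rule out by comparing $\dim\mathcal P_{n-3,\mathcal Z}$ with $\dim\mathcal P_{n-3,\mathcal X}$ through Proposition~\ref{cor:ind4} (using that an $m$-poised set is $(n-3)$-independent and hence must impose a condition). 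The degenerate ranges $n-m\in\{1,2\}$ and the small cases $n\le 2$ are handled separately by Corollaries~\ref{cor:2n+1}--\ref{cor:3n-1} and Theorem~\ref{thm:3n}.

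The second stage is forced by condition b): if a degree-$m$ curve through $\mathcal X$ had a repeated component, say $\sigma_m=\tau^2\rho$, then as a \emph{point set} $\mathcal X$ already lies on $\tau\rho$, a curve of degree $\deg\tau+\deg\rho<m$, contradicting b). Hence every $\sigma_m$ from Stage~1 is square-free, and b) shows moreover that $m$ is the exact minimal degree of a curve through $\mathcal X$, which is what permits me to apply Lemma~\ref{reduc} (and Lemma~\ref{red}) to the components of $\sigma_m$. For the third stage it suffices to find $\sigma_n\in\Pi_n$ vanishing on $\mathcal X$ and not divisible by any component of $\sigma_m$: then the two curves share no component, B\'ezout gives $\#(\sigma_m\cap\sigma_n)\le mn=\#\mathcal X$, and $\mathcal X\subset\sigma_m\cap\sigma_n$ yields equality. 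By \eqref{d'} together with Proposition~\ref{prop:complete} (applicable since $m\le n+2$), a degree-$n$ curve through $\mathcal X$ that is \emph{not} a multiple of $\sigma_m$ exists precisely when $\mathcal X$ fails to be $n$-complete in $\sigma_m$, i.e.\ when $\dim\mathcal P_{n,\mathcal X}>\dim\Pi_{n-m}$, equivalently when the maximal $n$-independent subset of $\mathcal X$ has fewer than $d(m,n)$ points. Passing from ``not a multiple of $\sigma_m$'' to ``shares no component with $\sigma_m$'' in the reducible case is then a finite-union-avoidance argument, each subspace of $\mathcal P_{n,\mathcal X}$ of curves divisible by a fixed component being proper.

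I expect the main obstacle to be exactly this non-completeness at degree $n$: deducing $\dim\mathcal P_{n,\mathcal X}>\dim\Pi_{n-m}$ from the Cayley--Bacharach hypothesis a). This is the genuine converse to the classical Cayley--Bacharach implication, and essentially all the force of condition a) is spent here; my intended route is to translate essential $\kappa$-dependence into the failure of $n$-independence at the threshold $d(m,n)$ by relating the spaces $\mathcal P_{n,\mathcal X}$ and $\mathcal P_{\kappa,\mathcal X}$ and peeling off the components of $\sigma_m$ with Lemmas~\ref{red} and~\ref{reduc}, whereas Stages~1 and~2 only locate $\sigma_m$ and certify that it is well behaved.
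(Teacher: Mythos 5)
Your skeleton (degree-$m$ curve through $\mathcal X$, then a degree-$n$ curve sharing no component, then B\'ezout) is the same as the paper's, and your ``only if'' direction is fine, but Stage~1 contains a fatal gap, located exactly where all the difficulty of the theorem sits. The step ``which I rule out by comparing $\dim\mathcal P_{n-3,\mathcal Z}$ with $\dim\mathcal P_{n-3,\mathcal X}$ \ldots\ using that an $m$-poised set is $(n-3)$-independent and hence must impose a condition'' is a non sequitur: independence of a set $\mathcal X_1$ \emph{by itself} does not imply that $\mathcal X_1$ imposes any condition on the curves through a disjoint set $\mathcal Z$; for that you would need points of $\mathcal X_1$ to have fundamental polynomials \emph{relative to the union} (cf.\ Proposition~\ref{cor:ind1}), which is essentially what is being proved. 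Concretely, take $n-3=2$, let $\mathcal Z$ be five points in general position on an irreducible conic $C$ and $\mathcal X_1$ three further points of $C$: then $\mathcal X_1$ is $2$-independent (Corollary~\ref{cor:n+1}), yet the only conic through $\mathcal Z$ is $C$ itself, so $\mathcal P_{2,\mathcal Z}=\mathcal P_{2,\mathcal Z\cup\mathcal X_1}$ and no condition is imposed. The scenario you are trying to exclude --- every curve of degree $n-3$ through $\mathcal Z$ contains all of $\mathcal X$ --- is precisely the alternative that the paper's Proposition~\ref{3} leaves open ($\mathcal X$ lies in a curve of degree $m$ \emph{or} of degree $n-3$), and eliminating it is the core of the paper: Proposition~\ref{4} passes to the curve of \emph{minimal} degree $m_0$ containing $\mathcal X$, uses the parabola inequality \eqref{aaa} to get $mn<m_0n_0$, and then invokes Theorems~\ref{2} and~\ref{2'} (which themselves rest on Cayley--Bacharach, B\'ezout and the completeness theory of Theorem~\ref{1}) to contradict essential $\kappa$-dependence. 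A dimension count of the kind you propose cannot replace this. Moreover, your fallback for the ranges $n-m\in\{1,2\}$ (and $n=m$, which you omit) via Theorem~\ref{thm:3n} and Corollaries~\ref{cor:2n+1}, \ref{cor:3n-1} is unavailable for $m\ge 4$: those results apply to sets of at most $3\kappa$ points, whereas $\#\mathcal X=mn>3(m+n-3)=3\kappa$ since $(m-3)(n-3)>0$.

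Stage~3 is also incomplete at its decisive point. The claim that condition a) forces $\mathcal X$ not to be $n$-complete in $\sigma_m$ is exactly the paper's Proposition~\ref{1''}; you only announce an ``intended route,'' while the paper proves it by an explicit construction (a curve of degree $m-3$ through $\mathcal X\setminus\mathcal Y$, where $\mathcal Y$ is an $n$-independent subset of $d(m,n)$ points, multiplied by $p^\star_{A,\mathcal Y}$, yields a $\kappa$-fundamental polynomial). Likewise, your finite-union-avoidance step presupposes that for each component $\tau$ of $\sigma_m$ the multiples of $\tau$ form a \emph{proper} subspace of $\mathcal P_{n,\mathcal X}$, which is neither obvious nor established; the paper sidesteps this by taking any $\sigma_n\in\mathcal P_{n,\mathcal X}$ not divisible by $\sigma_m$, assuming a common factor $\sigma_l$, and contradicting condition a) with the polynomial $\sigma_l\,p^\star_{A,\mathcal Y}$ of degree $m+n-l-2\le\kappa$, where $\mathcal Y=\sigma_{m-l}\cap\sigma_{n-l}\cap\mathcal X$ is nonempty by condition b). Stage~2 and the B\'ezout endgame are correct, but the two steps that carry the actual content of the theorem are, respectively, invalid and missing.
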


Let us mention that the necessity of the conditions a) and b) follow from Theorem \ref{CB} and Corollary \ref{cN}, respectively.
Note also that the condition a) above means that the point set $\mathcal X$ is essentially $\kappa$-dependent, while the condition b) means that the set $\mathcal X$ contains an $(m-1)$-poised set.

Next we prove the part of sufficience in the cases $m=1,2,3.$

\subsection{The proof of Thorem \ref{main} in the cases $m=1,2,3$}

\emph{The case $m=1.$}\hskip.01cm

\noindent In this case an essentially $(n-2)$-dependent set $\mathcal X=\{A_1,\ldots,A_n\}$ is given. By  Corollary \ref{cor:2n+1} we get that the $n$ points are collinear, i.e., belong to a line $\sigma_1.$
Hence we get that $\mathcal X=\sigma_1\cap\sigma_n,$ where $\sigma_n$ has $n$ line components intersecting $\sigma_1$ at the $n$ points of $\mathcal X$, respectively.

\noindent \emph{The case $m=2.$} \hskip.01cm

\noindent In this case an essentially $(n-1)$-dependent set $\mathcal X=\{A_1,\ldots,A_{2n}\}$ is given. By  Corollary \ref{cor:3n-1} we get that either $n+1$ points of $\mathcal X$ are collinear, i.e., belong to a line $\sigma_1,$ or all $2n$ points of $\mathcal X$ belong to a conic $\sigma_2.$
Suppose first that $n+1$ points of $\mathcal X$ belong to a line $\sigma_1.$ Then by denoting $\mathcal Y=\mathcal X\setminus \sigma_1$ we have that $\#\mathcal Y\le n-1.$ By the condition b) we have that  $\mathcal Y\neq \emptyset.$ Now we get from Lemma \ref{red} that the set $\mathcal Y$ is essentially $(n-2)$-dependent, which contradicts Corollary \ref{cor:n+1}.

Next, suppose that all the $2n$ points of $\mathcal X$ belong to the conic $\sigma_2.$ First consider the case when the conic $\sigma_2$ is irreducible.
Then we get that $\mathcal X=\sigma_2\cap\sigma_n,$ where $\sigma_n$ has $n$ line components intersecting $\sigma_2$ at $n$ disjoint couples of points, respectively.

Finally suppose that the conic $\sigma_2$ is reducible, i.e., is a pair of lines: $\sigma_2=\sigma_1\sigma_1'$. First let us prove that each of the component lines contains exactly $n$ points from $\Xset$ hence the intersection point of $\sigma_1$ and $\sigma_1'$ does not belong to $\Xset.$ Assume conversely that a component, say $\sigma_1,$ contains $n+1$ points from $\Xset.$
Then by denoting $\mathcal Y=\mathcal X\setminus \sigma_1$ we have that $\#\mathcal Y\le n-1.$ By the condition b) we have that $\mathcal Y\neq \emptyset.$ Now we get from Lemma \ref{red} that the set $\mathcal Y$ is essentially $(n-2)$-dependent, which contradicts Corollary \ref{cor:n+1}.

Hence we get that $\mathcal X=\sigma_2\cap\sigma_n,$ where $\sigma_n$ has $n$ line components intersecting $\sigma_1$ and $\sigma_1'$ at $n$ disjoint couples of points, one from $\sigma_1$ and another from $\sigma_1'.$

\noindent \emph{The case $m=3.$} \hskip.01cm

 In this case an essentially $n$-dependent set $\mathcal X=\{A_1,\ldots,A_{3n}\}$ is given. By  Theorem \ref{thm:3n} we get that either $n+2$ points  belong to a line $\sigma_1,$  $2n+2$ points belong to a conic $\sigma_2,$ or $\mathcal X=\sigma_3\cap\sigma_n,$ where $\sigma_i\in\Pi_i.$ It is enough to exclude here the first two possibilities.

Suppose first that $n+2$ points of $\mathcal X$ belong to a line $\sigma_1.$ Then by denoting $\mathcal Y=\mathcal X\setminus \sigma_1$ we have that $\#\mathcal Y\le 2n-2.$ By the condition b) we have that  $\mathcal Y\neq \emptyset.$ Now we get from Lemma \ref{red} that the set $\mathcal Y$ is essentially $(n-1)$-dependent. Hence by Corollary \ref{cor:2n+1} we get that $n+1$ points of $\Yset$ belong to a line $\sigma_1'.$ Then by denoting $\mathcal Z=\mathcal X\setminus (\sigma_1\cup\sigma_1')$ we have that $\#\mathcal Z\le n-3.$ By the condition b) we have that $\mathcal Z\neq \emptyset.$ Now we get from Lemma \ref{red} that the set $\mathcal Z$ is essentially $(n-2)$-dependent, which contradicts Corollary \ref{cor:n+1}.

Next, suppose that $2n$ points belong to a conic $\sigma_2.$ Then by denoting $\mathcal Y=\mathcal X\setminus \sigma_2$ we have that  $\#\mathcal Y \le n, \mathcal Y \neq \emptyset.$ Now we get from Lemma \ref{red} that the set $\mathcal Y$ is essentially $(n-2)$-dependent. Hence by By  Corollary \ref{cor:2n+1} we get that the set $\Yset$ has exactly $n$ collinear points, belonging to a line $\sigma_1.$   Then by denoting $\mathcal Z=\mathcal X\setminus \sigma_1$ we have that  $\#\mathcal Z \le 2n, \mathcal Z \neq \emptyset.$      Now we get readily from Lemma \ref{red} that the set $\mathcal Z$ is essentially $(n-1)$-dependent. Next we conclude, as above, from Corollary 1.4, that $\Zset$ contains exactly $2n$ points and in the case when the conic $\sigma_2$ has two line components, then each of the component lines contains exactly $n$ points from $\Xset.$

Now, from Proposition \ref{prop:complete} we get that $\mathcal X$ is not $n$-complete in $\sigma_3:=\sigma_1\sigma_2.$ Hence there is a polynomial $\sigma_n\in\Pi_n$ vanishing at $\Xset$ but not on whole $\sigma_3,$ in particular
$$\Xset\subset \sigma_3\cap\sigma_n.$$
It remains to verify that $\sigma_3$ and $\sigma_n$ have no common components.

Indeed, suppose that the  common component of the highest degree is $\sigma,$ where $\sigma\in\Pi_2.$
Then we have that $\sigma_n=\sigma\sigma_{n-2},$ where $\sigma_{n-2}\in\Pi_{n-2}.$ Now consider the line component $\sigma_1,$ of  $\sigma_3$ which is not a component of $\sigma.$
On that component we have exactly $n$ points which are outside of $\sigma.$ Hence these $n$ points belong to the curves $\sigma_1$ and $\sigma_{n-1},$ which contradicts to the Bezout theorem, since  the curves have no common component.

Finally, suppose that $\sigma\in\Pi_1.$
Then we have that $\sigma_n=\sigma\sigma_{n-1},$ where $\sigma_{n-1}\in\Pi_{n-1}.$ Now consider a (the) component $\sigma_k,\ k\le 2$ of  $\sigma_3$ different from $\sigma.$
On that component we have exactly $kn$ points which are outside of $\sigma.$ Hence these $kn$ points belong to the curves $\sigma_k$ and $\sigma_{n-1},$ which contradicts to the Bezout theorem, since  the curves have no common component.

\subsection{The proof of Thorem \ref{main} in the case $m\ge 4.$}

The proof of the sufficiency part of Thorem \ref{main} is completed in the forthcoming Theorem \ref{6} at the end of the section.

Let us start the discussion with the following

\begin{theorem} \label{1} Suppose that an irreducible curve  $\sigma_m$ of degree $m$ contains a set $\mathcal X$ of $mn$ points.  Then the following statements hold:\\
(a) If the set $\mathcal X$ is $\kappa$-independent then it is $n$-complete in $\sigma_m.$\\
(b) Suppose that $3\le m\le n+2.$ If the set $\mathcal X$ is $n$-complete in $\sigma_m$  then  it is $\kappa$-independent.\\
\end{theorem}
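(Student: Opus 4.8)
The plan is to exploit the dimension-counting characterization of $n$-completeness from \eqref{d'}--\eqref{dd'}, so that both directions reduce to comparing $\dim{\mathcal P}_{n,{\mathcal X}}$ with $\dim\Pi_{n-m}$. Since $\sigma_m$ is irreducible of degree $m$, it has no repeated components, so Definition \ref{def:depset} applies, and by \eqref{d'} the set $\mathcal X$ is $n$-complete in $\sigma_m$ precisely when $\dim{\mathcal P}_{n,{\mathcal X}}=\dim{\mathcal P}_{n,\sigma_m}=\dim\Pi_{n-m}$. On the other hand, by Proposition \ref{cor:ind4}, $\dim{\mathcal P}_{n,{\mathcal X}}=\dim\Pi_n-\#{\mathcal X}_0$, where ${\mathcal X}_0$ is a maximal $n$-independent subset. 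The whole theorem therefore becomes a statement about the size of a maximal $n$-independent subset of $\mathcal X$, and the numerics to keep in mind are $\dim\Pi_n-\dim\Pi_{n-m}=d(m,n)=\frac12 m(2n-m+3)$ (valid since $m\le n+2$), which is exactly the number of points one expects an $n$-complete set on $\sigma_m$ to carry.

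For part (a), I would argue that $\kappa$-independence forces $\mathcal X$ to be essentially near the $n$-poised regime on $\sigma_m$. The cleanest route is to show that a $\kappa$-independent $\mathcal X\subset\sigma_m$ is in fact $n$-independent, or at least that its maximal $n$-independent subset has the full size $d(m,n)$; then Proposition \ref{prop:complete} (applicable since $m\le n+2$, which I would need to note is guaranteed here because $\#{\mathcal X}=mn$ with the standing hypotheses) immediately yields $n$-completeness. Concretely, I expect to use the curve $\sigma_m$ itself as a multiplier: any $\kappa$-fundamental polynomial, multiplied appropriately or combined with $\sigma_m$, should lift fundamental polynomials up to degree $n$, using the gap $n-\kappa=3-m$ — but since $m\ge 3$ this gap is nonpositive, so the lifting must instead go the other way, extracting $n$-independence from $\kappa$-independence by a restriction/Bezout argument on the irreducible curve $\sigma_m$.

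For part (b), with the extra hypothesis $3\le m\le n+2$, I would run the converse via the same dimension count: $n$-completeness gives $\dim{\mathcal P}_{n,{\mathcal X}}=\dim\Pi_{n-m}$, hence by Proposition \ref{cor:ind4} a maximal $n$-independent subset ${\mathcal X}_0$ has size exactly $d(m,n)$. I then need to deduce $\kappa$-independence of all of $\mathcal X$, i.e.\ that every point $A\in\mathcal X$ has a $\kappa$-fundamental polynomial. The natural construction is to produce, for each $A$, a curve of degree $\kappa=m+n-3$ vanishing on ${\mathcal X}\setminus\{A\}$ but not at $A$; on the irreducible $\sigma_m$ this is a Cayley--Bacharach-type statement, and I would try to realize such a curve as the restriction to $\sigma_m$ of the $n$-fundamental data, using irreducibility to control common components via Bezout.

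The main obstacle I anticipate is part (a) when $m$ is large relative to $n$: the inequality $\kappa=m+n-3$ can be much smaller than $n$, so $\kappa$-independence is a priori a much weaker hypothesis than $n$-independence, and bridging that gap is exactly where irreducibility of $\sigma_m$ must do the heavy lifting. The hard part will be showing that no curve of degree $n$ other than multiples of $\sigma_m$ can vanish on all of $\mathcal X$; I expect to control this by intersecting with $\sigma_m$ and invoking Bezout (a degree-$n$ curve meets the irreducible $\sigma_m$ in at most $mn$ points unless $\sigma_m$ is a component), together with the essential-dependence reductions in Lemmas \ref{red} and \ref{reduc} to handle the interplay between the point count $mn$ and the completeness threshold $d(m,n)$.
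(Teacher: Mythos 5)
Your setup via \eqref{d'}, \eqref{dd'} and Propositions \ref{cor:ind4}, \ref{prop:complete} is a reasonable framing, but both halves of your plan stop exactly where the real work begins, and the missing ingredient in each case is the Cayley--Bacharach theorem (Theorem \ref{CB}), which you never invoke. In part (a) you correctly reduce to showing that no $\sigma_n\in\Pi_n$ can vanish on $\mathcal X$ without vanishing on all of $\sigma_m$, and you propose to get a contradiction from Bezout; but Bezout alone cannot give one, precisely because $\#\mathcal X=mn$ saturates the Bezout bound: a curve $\sigma_n$ meeting the irreducible $\sigma_m$ in exactly the $mn$ points of $\mathcal X$ is fully consistent with Bezout. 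The paper's proof uses Bezout only to conclude that in this situation $\mathcal X=\sigma_m\cap\sigma_n$ is a complete intersection, and then applies Theorem \ref{CB}(a) to conclude that $\mathcal X$ is essentially $\kappa$-dependent, contradicting the hypothesis --- that application of Cayley--Bacharach is the whole point, and the Lemmas \ref{red}/\ref{reduc} you cite instead cannot substitute for it (they remove points lying on auxiliary curves from essentially dependent sets and give nothing here). Two smaller problems in the same paragraph: your first-choice route ``show $\mathcal X$ is $n$-independent'' is impossible whenever $m>3$, since any $n$-independent subset of $\sigma_m$ has at most $d(m,n)<mn$ points; and your remark that $\kappa$ ``can be much smaller than $n$'' when $m$ is large is backwards, since $\kappa-n=m-3\ge 0$ (though your conclusion that $\kappa$-independence is the weaker hypothesis is right).

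Part (b) has the same kind of gap. From $n$-completeness and Proposition \ref{prop:complete} you correctly extract an $n$-independent subset $\mathcal Y$ with $\#\mathcal Y=d(m,n)$, but ``realizing a $\kappa$-fundamental polynomial as the restriction to $\sigma_m$ of the $n$-fundamental data'' is not a construction: an $n$-fundamental polynomial of a point of $\mathcal Y$ need not vanish at the $\frac12 m(m-3)$ leftover points of $\mathcal Z=\mathcal X\setminus\mathcal Y$, and the points of $\mathcal Z$ themselves also need $\kappa$-fundamental polynomials. The paper's argument handles both issues at once: it fits a curve $\sigma_{m-3}$ through $\mathcal Z$ (Lemma \ref{curve} applies because $\#\mathcal Z=\frac12(m-3)m$ exactly), sets $\bar{\mathcal Z}=\mathcal X\cap\sigma_{m-3}$, applies Theorem \ref{CB}(b) to $\bar{\mathcal Z}\subset\sigma_m\cap\sigma_{m-3}$ (irreducibility of $\sigma_m$ rules out a common component) to get $(2m-5)$-independence, hence $\kappa$-independence since $\kappa=m+n-3\ge 2m-5$ when $m\le n+2$, and finally glues this with the $n$-independent set $\mathcal X\setminus\bar{\mathcal Z}\subset\mathcal Y$ via Corollary \ref{cor:ind1} to conclude $\mathcal X$ is $\kappa$-independent. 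None of these steps --- the auxiliary curve of degree $m-3$, the second use of Cayley--Bacharach, the gluing corollary --- appears in your plan, so as it stands the proposal is a correct dimension-counting frame with the two central arguments missing.
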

\begin{proof} $\ $ Part a): $\ $ Suppose that a set $\mathcal X\subset\sigma_m$ is not $n$-complete in $\sigma_m.$
Then there is a polynomial $\sigma_n\in\Pi_n$   that vanishes on $\mathcal X$ but not on $\sigma_m.$
Then, since the curve $\sigma_m$ is irreducible we conclude from the Bezout theorem that
$$\mathcal X =\sigma_m\cap \sigma_n.$$
Now we get from Theorem \ref{CB}, a), that $\mathcal X$ is $\kappa$-dependent.
More precisely, we get from Theorem \ref{CB}, a), that $\mathcal X$ is essentially $\kappa$-dependent and, from the item c), that for any point $A\in \mathcal X$ the point set $\mathcal X\setminus \{A\}$ is $\kappa$-independent.

\indent Part b): $\ $ Suppose that the set of points $\mathcal X$ is $n$-complete in $\sigma_m.$
Then, according to Proposition \ref{prop:complete}, we have that $\mathcal X$ contains an $n$-independent subset $\mathcal Y$ of $d(m,n)$ points. Since $m\le n+2$ the number of points in $\mathcal Z:=\mathcal X\setminus \mathcal Y$ equals
$$mn-d(m,n)=mn - \frac{1}{2}m(2n-m+3)=\frac{1}{2}m(m-3).$$
Thus in the case $m=3$ we have that $\mathcal X=\mathcal Y$ is $\kappa=n$-independent.
Now assume that $m>3.$
In view of Lemma \ref{curve} we have that there is a curve $\sigma_{m-3}$ of degree $m-3$ containing all the points of $\mathcal Z.$
Denote by $\bar{\mathcal Z}$ the set of all points of $\mathcal X$ in $\sigma_{m-3}.$ Since the curve $\sigma_m$ is irreducible it has no common component with  $\sigma_{m-3}.$
Next, we have that
$$\bar{\mathcal Z}\subset \sigma_{m} \cap \sigma_{m-3}.$$
Therefore by Theorem \ref{CB}, b), the set $\bar{\mathcal Z}$ is $m+(m-3)-2=(2m-5)$-independent.
On the other hand we have that $\kappa=m+n-3\ge 2m-5.$ Therefore
the set $\bar{\mathcal Z}$ is $\kappa$-independent.
Then, we have that the set $\mathcal X\setminus \bar{\mathcal Z}\subset \mathcal X\setminus \mathcal Z=\mathcal Y$ is $n$-independent.
By Corollary \ref{cor:ind1} the set $\mathcal X$ is $\kappa$-independent.
\end{proof}

We get immediately from the proof of the part a) (the last sentence there):
\begin{corollary} \label{1'}
Suppose that an irreducible curve  $\sigma_m$ of degree $m$ contains a set $\mathcal X$ of $mn$ points, which is not $n$-complete.
Then the set $\mathcal X$ is essentially $\kappa$-dependent and for any point $A\in \mathcal X$ the set $\mathcal X\setminus \{A\}$ is $\kappa$-independent.
\end{corollary}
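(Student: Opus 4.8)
The plan is to recognize that the hypotheses of this corollary are exactly those that arise inside the proof of Theorem \ref{1}(a), so the argument amounts to reading off what was already established there. The only substantive move is to convert the failure of $n$-completeness into a genuine complete intersection of two curves, after which the Cayley--Bacharach theorem delivers both conclusions directly.

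First I would unwind the hypothesis that $\mathcal X$ is not $n$-complete in $\sigma_m.$ By Definition \ref{def:depset} (completeness), the failure of $n$-completeness produces a polynomial $\sigma_n\in\Pi_n$ with $\sigma_n\big\vert_{\mathcal X}=0$ that is \emph{not} of the form $q\sigma_m.$ Since $\sigma_m$ is irreducible, being of the form $q\sigma_m$ is equivalent to $\sigma_m$ dividing $\sigma_n,$ which in turn is equivalent to $\sigma_n$ vanishing on all of $\sigma_m.$ Hence $\sigma_n$ does not vanish identically on $\sigma_m,$ and therefore $\sigma_m$ and $\sigma_n$ share no common component.

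Next I would invoke Bezout's theorem. As $\sigma_m$ and $\sigma_n$ have no common component, their intersection consists of at most $mn$ points; but $\mathcal X\subset\sigma_m\cap\sigma_n$ with $\#\mathcal X=mn,$ which forces
\[
\mathcal X=\sigma_m\cap\sigma_n .
\]
Thus $\mathcal X$ is realised as the full set of intersection points of two plane curves of degrees $m$ and $n.$ The two assertions then follow verbatim from Theorem \ref{CB}: part a) gives that $\mathcal X$ is essentially $\kappa$-dependent, and part c) gives that for every $A\in\mathcal X$ the set $\mathcal X\setminus\{A\}$ is $\kappa$-independent.

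Since the result is stated as an immediate corollary, I do not expect a serious obstacle. The one step carrying any content is the Bezout count that upgrades the inequality $\#(\sigma_m\cap\sigma_n)\le mn$ to the equality $\mathcal X=\sigma_m\cap\sigma_n$; here one uses that the $mn$ \emph{distinct} points of $\mathcal X$ already saturate Bezout's bound, so there can be neither additional intersection points nor higher multiplicities. Everything following that reduction is a direct citation of Theorem \ref{CB}, which is precisely why the statement is flagged as an immediate consequence of the last sentence of the proof of Theorem \ref{1}(a).
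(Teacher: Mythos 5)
Your proposal is correct and takes essentially the same route as the paper: the paper derives Corollary \ref{1'} directly from the proof of Theorem \ref{1}, part a), which likewise produces $\sigma_n$ from the failure of $n$-completeness, uses irreducibility of $\sigma_m$ together with the Bezout theorem to conclude $\mathcal X=\sigma_m\cap\sigma_n,$ and then cites Theorem \ref{CB}, parts a) and c). The only difference is that you spell out the steps the paper leaves implicit, namely why irreducibility rules out a common component and why the count of $mn$ distinct points forces equality in the Bezout bound.
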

We get from the proof of the part b) of Theorem \ref{1} the following
\begin{proposition}  \label{1''} Suppose that $3\le m\le n+2$ and a (not necessarily irreducible) curve  $\sigma_m$ of degree $m$ contains a set $\mathcal X$ of $\le mn$ points, which is $n$-complete.
Then the set $\mathcal X$ is not essentially $\kappa$-dependent.
\end{proposition}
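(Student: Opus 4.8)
The plan is to reuse the opening of the proof of Theorem \ref{1}(b) essentially verbatim — those steps invoke neither the irreducibility of $\sigma_m$ nor the exact value $\#\mathcal X=mn$ — and then to replace the Bezout-based conclusion by the explicit construction of a single fundamental polynomial. Since $\mathcal X$ is $n$-complete in $\sigma_m$ and $m\le n+2$, Proposition \ref{prop:complete} yields an $n$-independent subset $\mathcal Y\subset\mathcal X$ with $\#\mathcal Y=d(m,n)$. Putting $\mathcal Z:=\mathcal X\setminus\mathcal Y$, the hypothesis $\#\mathcal X\le mn$ gives $\#\mathcal Z\le mn-d(m,n)=\tfrac12 m(m-3)$, so Lemma \ref{curve} supplies a curve $\sigma_{m-3}$ of degree $m-3$ through every point of $\mathcal Z$ (for $m=3$ this degenerates to a nonzero constant and $\mathcal Z=\emptyset$).

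Now I would exploit that ``not essentially $\kappa$-dependent'' is far weaker than the full $\kappa$-independence proved in Theorem \ref{1}(b): it only requires that some single point of $\mathcal X$ carry a $\kappa$-fundamental polynomial, and this is exactly what frees us from any use of irreducibility (indeed $\sigma_m$ itself ceases to appear). Granting for a moment a point $A\in\mathcal Y$ lying off $\sigma_{m-3}$, I would set
\[ p:=p_{A,\mathcal Y}^\star\,\sigma_{m-3},\qquad \deg p=n+(m-3)=\kappa. \]
Then $p$ vanishes on $\mathcal Y\setminus\{A\}$ because $p_{A,\mathcal Y}^\star$ does, it vanishes on $\mathcal Z$ because $\sigma_{m-3}$ does, while $p(A)=p_{A,\mathcal Y}^\star(A)\,\sigma_{m-3}(A)\neq 0$ since $A\notin\sigma_{m-3}$. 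As $\mathcal X=\mathcal Y\cup\mathcal Z$, this $p$ is a $\kappa$-fundamental polynomial of $A$ in $\mathcal X$, so $\mathcal X$ is not essentially $\kappa$-dependent.

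The one point left to justify — and the real content of the proposal — is the existence of such an $A$, i.e.\ that $\mathcal Y\not\subset\sigma_{m-3}$. I would argue this by a dimension count within the paper's own machinery: were $\mathcal Y\subset\sigma_{m-3}$, then \eqref{d} gives $\mathcal P_{n,\mathcal Y}\supset\mathcal P_{n,\sigma_{m-3}}$, so by \eqref{dd'} together with Proposition \ref{cor:ind4} applied to $\mathcal Y$ (which is its own maximal $n$-independent subset),
\[ \dim\Pi_n-\#\mathcal Y=\dim\mathcal P_{n,\mathcal Y}\ge\dim\mathcal P_{n,\sigma_{m-3}}=\dim\Pi_{n-m+3}, \]
whence $\#\mathcal Y\le d(m-3,n)$. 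A direct computation gives $d(m,n)-d(m-3,n)=3(n-m+3)>0$, the strict inequality coming precisely from $m\le n+2$; this contradicts $\#\mathcal Y=d(m,n)$. Hence $\mathcal Y\not\subset\sigma_{m-3}$ and the argument closes. The main obstacle is thus reduced to this single cardinality comparison, and it is the bound $m\le n+2$ (equivalently $n-m+3\ge1$) that makes it decisive.
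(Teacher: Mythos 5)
Your proof is correct, and its skeleton is exactly the paper's: extract the $n$-independent subset $\mathcal Y\subset\mathcal X$ with $\#\mathcal Y=d(m,n)$ via Proposition \ref{prop:complete}, place $\mathcal Z=\mathcal X\setminus\mathcal Y$ (of cardinality at most $\frac12 m(m-3)$) on a curve $\sigma_{m-3}$ via Lemma \ref{curve}, handle $m=3$ separately, and exhibit $p=\sigma_{m-3}\,p^\star_{A,\mathcal Y}$ as a $\kappa$-fundamental polynomial of a point $A\in\mathcal Y\setminus\sigma_{m-3}$. The single point of divergence is how that point $A$ is produced. The paper does it in one line by invoking the $n$-completeness hypothesis a second time: if $\mathcal X\subset\sigma_{m-3}$, then $\sigma_{m-3}\in\Pi_n$ vanishes on $\mathcal X$ but cannot be a multiple of $\sigma_m$ for degree reasons, contradicting completeness; any $A\in\mathcal X\setminus\sigma_{m-3}$ then lies in $\mathcal Y$ automatically, since $\mathcal Z\subset\sigma_{m-3}$. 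You instead prove $\mathcal Y\not\subset\sigma_{m-3}$ by a cardinality count, and the count checks out: by \eqref{d}, \eqref{dd'} and Proposition \ref{cor:ind4}, an $n$-independent set lying on a curve of degree $m-3$ has at most $d(m-3,n)$ points, while $d(m,n)-d(m-3,n)=3(n-m+3)>0$ precisely because $m\le n+2$. The trade-off: your variant never uses $\sigma_m$ or the completeness hypothesis after $\mathcal Y$ is extracted, so it in fact establishes the slightly more general statement that any set of at most $mn$ points containing an $n$-independent subset of $d(m,n)$ points fails to be essentially $\kappa$-dependent; the cost is that the dimension count is noticeably longer than the paper's one-line degree argument.
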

\begin{proof}By proof of part b) of Theorem \ref{1} we have that there is a curve $\sigma_{m-3}$ of degree $m-3$ passing through all the points of the set $\mathcal Z=\mathcal X \setminus \mathcal Y.$ In the case $m=3$ we have that $\mathcal X = \mathcal Y$ and thus is $\kappa=n$-independent. Now suppose that $m>3.$
Let us show that $\sigma_{m-3}$ does not contain all of $\mathcal X.$ Indeed, if $\mathcal X\subset \sigma_{m-3}$  then the polynomial $\sigma_{m-3}$ vanishes on $\mathcal X$ but not on $\sigma_m.$ Hence $\mathcal X$ is not $n$-complete in $\sigma_m$ which is a contradiction.
Next, choose a point $A\in \mathcal X\setminus \sigma_{m-3}.$ We have that $A\in\mathcal Y.$ Consider the fundamental polynomial $p^\star_{A, \mathcal Y}.$
Finally, notice that $p:=\sigma_{m-3}p^\star_{A, \mathcal Y}$ is a fundamental polynomial of $A$ in the set $\mathcal X$ of degree $\kappa=m+n-3.$ Hence, the set $\mathcal X$ is not essentially $\kappa$-dependent.
\end{proof}
\begin{theorem}\label{2}
Assume that $m\le n+2.$ Then we have that any set of points $\mathcal X,$ with $\#\mathcal X\le m(\kappa+3-m)-1= mn-1,$ in an irreducible curve $\sigma_m,$   is $\kappa$-independent.
\end{theorem}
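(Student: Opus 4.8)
The plan is to deduce the statement for at most $mn-1$ points from the two results already established for exactly $mn$ points on an irreducible curve, namely Theorem~\ref{1}(b) and Corollary~\ref{1'}. First I would record the elementary but crucial fact that $\kappa$-independence is inherited by subsets: if $\mathcal X\subset\mathcal S$ and $\mathcal S$ is $\kappa$-independent, then for each $A\in\mathcal X$ a $\kappa$-fundamental polynomial of $A$ in $\mathcal S$ vanishes on $\mathcal S\setminus\{A\}\supset\mathcal X\setminus\{A\}$ and takes the value $1$ at $A$, so it serves as a $\kappa$-fundamental polynomial of $A$ in $\mathcal X$ as well. Thus it suffices to exhibit a $\kappa$-independent set containing $\mathcal X$.

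To this end, since $\sigma_m$ is irreducible and hence an infinite subset of $\mathbb C^2$, I would enlarge $\mathcal X$ by adjoining $mn-\#\mathcal X\ge 1$ further distinct points lying on $\sigma_m$, producing a set $\hat{\mathcal X}\subset\sigma_m$ with $\#\hat{\mathcal X}=mn$ and $\mathcal X\subsetneq\hat{\mathcal X}$. The argument then splits according to whether $\hat{\mathcal X}$ is $n$-complete in $\sigma_m$. If it is not $n$-complete, Corollary~\ref{1'} applies directly and gives that $\hat{\mathcal X}\setminus\{A\}$ is $\kappa$-independent for every $A\in\hat{\mathcal X}$; taking $A$ to be any one of the adjoined points yields $\mathcal X\subset\hat{\mathcal X}\setminus\{A\}$, and the subset remark finishes this case. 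If $\hat{\mathcal X}$ is $n$-complete, then Theorem~\ref{1}(b) shows that $\hat{\mathcal X}$ itself is $\kappa$-independent, whence so is its subset $\mathcal X$.

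The only point demanding care is the verification that these two branches jointly cover all admissible degrees, and this is where I expect the main (though modest) obstacle to lie, since Theorem~\ref{1}(b) carries the hypothesis $3\le m\le n+2$. The upper bound $m\le n+2$ is assumed in the theorem, so it remains to guarantee $m\ge 3$ whenever the $n$-complete branch is actually invoked. This is automatic: by Proposition~\ref{prop:complete} an $n$-complete set of $mn$ points on $\sigma_m$ must contain an $n$-independent subset of $d(m,n)$ points, and the identity $mn-d(m,n)=\tfrac{1}{2} m(m-3)$ already computed in the proof of Theorem~\ref{1}(b) shows this quantity to be negative for $m=1,2$. Hence $n$-completeness is impossible when $m\le 2$, so the degrees $m=1,2$ always fall into the ``not $n$-complete'' branch, where only Corollary~\ref{1'} is needed, and the entire range $m\le n+2$ is thereby covered.
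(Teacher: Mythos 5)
Your proof is correct and follows essentially the same route as the paper's: both rest on the dichotomy between $n$-completeness and non-completeness in $\sigma_m$ of a set enlarged to $mn$ points, invoking Theorem \ref{1}(b) in the complete case and Corollary \ref{1'} in the non-complete case, together with the heredity of $\kappa$-independence under passing to subsets. Your deviations are minor refinements rather than a different method: you enlarge $\mathcal X$ to exactly $mn$ points (the paper adds only a single point, so its auxiliary set need not have the exactly $mn$ points that Theorem \ref{1} and Corollary \ref{1'} formally require), and you dispose of $m=1,2$ by showing via Proposition \ref{prop:complete} that the $n$-complete branch is vacuous there, where the paper simply declares these cases evident.
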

\begin{proof} The cases $m=1$ and $m=2$ are evident. Suppose that $m\ge 3.$ Let us add a point $A\in\sigma_m\setminus\mathcal X$ to $\mathcal X.$ If the resulted set ${\mathcal Y}:={\mathcal X}\cup\{A\}$ is $\kappa$-independent
then $\mathcal X\subset \mathcal Y$ is also $\kappa$-independent and Theorem is proved. Now, suppose that $\mathcal Y$ is $\kappa$-dependent. Then, according to Theorem \ref{1}, (b), it is
not $n$-complete in $\sigma_m.$ Then we get from Corollary \ref{1'} that $\mathcal Y$ is essentially $\kappa$-dependent and ${\mathcal X}={\mathcal Y}\setminus \{A\}$ is $\kappa$-independent.
\end{proof}

\begin{theorem}\label{2'}
Assume that $\sigma_m$ is a curve of degree $m$, which is either not reducible or is reducible such that all its irreducible components are not empty with respect to a set $\mathcal X\subset\sigma_m.$ Assume also that $\#\mathcal X\le mn-1=m(\kappa+3-m)-1,$ where $m\le n+2.$ Then $\Xset$ is not essentially $\kappa$-dependent. \end{theorem}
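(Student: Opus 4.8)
The plan is to reduce everything to Theorem \ref{2} through the component machinery of Lemma \ref{reduc}. First I would dispose of the irreducible case: if $\sigma_m$ is irreducible, then since $\#\mathcal{X}\le mn-1$ and $m\le n+2$, Theorem \ref{2} already gives that $\mathcal{X}$ is $\kappa$-independent, which is stronger than the required non-essential-$\kappa$-dependence. So the real content lies in the reducible case. There I would decompose $\sigma_m=\sigma_{m_1}\cdots\sigma_{m_s}$ into its irreducible components, with $\deg\sigma_{m_i}=m_i$ and $\sum_{i=1}^s m_i=m$; note that each $m_i\le m\le n+2$, so Theorem \ref{2} is applicable to each component separately. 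By hypothesis every component is not empty with respect to $\mathcal{X}$, that is, $\mathcal{X}_i\neq\emptyset$ for the sets $\mathcal{X}_i$ of \eqref{comp'}.

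Next I would argue by contradiction: suppose $\mathcal{X}$ is essentially $\kappa$-dependent. Applying Lemma \ref{reduc} to the reducible curve $\sigma_m$ of degree $m$ yields that each $\mathcal{X}_i$ is essentially $(\kappa-m+m_i)$-dependent. The key observation is the degree bookkeeping: since $\kappa=m+n-3$, one has $\kappa-m+m_i=m_i+n-3=\kappa(m_i,n)$, so each $\mathcal{X}_i\subset\sigma_{m_i}$ is essentially $\kappa(m_i,n)$-dependent, exactly the ``$\kappa$'' matched to the sub-curve $\sigma_{m_i}$ of degree $m_i$.

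Finally I would run a counting (pigeonhole) step. Because the sets $\mathcal{X}_1,\ldots,\mathcal{X}_s$ are pairwise disjoint subsets of $\mathcal{X}$, we have $\sum_{i=1}^s\#\mathcal{X}_i\le\#\mathcal{X}\le mn-1<mn=\sum_{i=1}^s m_i n$. Hence there is an index $i_0$ with $\#\mathcal{X}_{i_0}\le m_{i_0}n-1$. For that component $\sigma_{m_{i_0}}$ is irreducible, $m_{i_0}\le n+2$, and $\#\mathcal{X}_{i_0}\le m_{i_0}n-1$, so Theorem \ref{2} says $\mathcal{X}_{i_0}$ is $\kappa(m_{i_0},n)$-independent. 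Since $\mathcal{X}_{i_0}\neq\emptyset$, this directly contradicts the essential $\kappa(m_{i_0},n)$-dependence obtained above, and the contradiction proves that $\mathcal{X}$ cannot be essentially $\kappa$-dependent.

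I expect the only genuinely delicate points to be the degree arithmetic $\kappa-m+m_i=\kappa(m_i,n)$, which makes the reduced problem on $\sigma_{m_i}$ line up with Theorem \ref{2}, together with the \emph{strict} inequality $\#\mathcal{X}\le mn-1<mn$ that feeds the pigeonhole step; everything else is a direct invocation of Lemma \ref{reduc} and Theorem \ref{2}.
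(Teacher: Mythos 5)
Your proof is correct and follows essentially the same route as the paper: reduce the irreducible case to Theorem \ref{2}, then in the reducible case argue by contradiction via Lemma \ref{reduc} and the degree arithmetic $\kappa-m+m_i=\kappa(m_i,n)$. The only (immaterial) difference is the final counting step: the paper applies the contrapositive of Theorem \ref{2} to every component to get $\#\mathcal{X}_i\ge m_i n$ and sums to reach $\#\mathcal{X}\ge mn$, whereas you use pigeonhole to locate a single component with $\#\mathcal{X}_{i_0}\le m_{i_0}n-1$ and contradict its essential dependence directly --- two equivalent formulations of the same count.
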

\begin{proof}
The cases $m=1$ and $m=2$ are evident. Suppose that $m\ge 3.$ The case when $\sigma_m$ is irreducible follows immediately from Theorem \ref{2}. Now assume that $\sigma_m$ is reducible, i.e.,
$$\sigma_m=\sigma_{m_1}\cdots\sigma_{m_s},$$
 where the component $\sigma_{m_i}$ is irreducible and has degree $m_i.$

Assume, by way of contradiction, that $\mathcal X$ is essentially $\kappa$-dependent. Consider the set  ${\mathcal X}_i,\ i=1,\ldots,s,$ given in \eqref{comp'}. By the hypothesis ${\mathcal X}_i\neq  \emptyset,\ i=1,\ldots,s.$
Since $\mathcal X$ is essentially $\kappa$-dependent we get from Lemma \ref{reduc} that the set ${\mathcal X}_i$  is essentially $(\kappa-m+m_i)$-dependent.
 Next we are going to apply here Theorem \ref{2}. Note that the condition $m\le n+2$ here reduces to $m_i\le (\kappa-m+m_i)-m_i+5,$ which is satisfied, since in its turn it reduces  to $m_i\le \kappa-m+5=n+2.$ Now, we conclude from Theorem \ref{2} that $\#{\mathcal X}_i\ge m_i[(\kappa-m+m_i)-m_i+3)]=m_i(\kappa-m+3).$ From here, by summing up, we get $\#\mathcal X\ge m(\kappa-m+3)=mn,$ which is a contradiction.
\end{proof}

\begin{proposition} \label{3}Suppose that $m\le n.$ If a point set $\mathcal X,$ with $\#\mathcal X\le mn,$ is essentially $\kappa$-dependent then all the points of $\mathcal X$ lay in a curve of
degree $m$ or  $n-3.$
\end{proposition}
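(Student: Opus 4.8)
The plan is to proceed by induction on $m$, using the reduction lemmas to peel off a low-degree component of a curve containing many points of $\mathcal X$, and then to invoke the characterization theorems already at hand. The statement to prove is that an essentially $\kappa$-dependent set $\mathcal X$ with $\#\mathcal X\le mn$ must lie in a curve of degree $m$ or of degree $n-3$. Since $\kappa=m+n-3$, the quantity $\#\mathcal X\le mn$ is exactly the threshold at which essential $\kappa$-dependence becomes possible for a complete intersection, so I expect the argument to locate a curve of small degree forced by one of the earlier structural results (Corollary \ref{cor:2n+1}, Corollary \ref{cor:3n-1}, or Theorem \ref{thm:3n}) and then to bootstrap its degree up to $m$ or down to $n-3$.

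First I would dispose of small cases. For $m=1,2,3$ the set $\mathcal X$ has at most $3n$ points and is essentially $\kappa$-dependent with $\kappa=n-2,n-1,n$ respectively; here I would read off collinearity or coconicity directly from Corollaries \ref{cor:2n+1} and \ref{cor:3n-1} and Theorem \ref{thm:3n}, checking that the resulting curve has degree $\le m$ (hence a curve of degree $m$ contains $\mathcal X$ after adjoining line components). For the inductive step with $m\ge 4$, the natural move is to apply Theorem \ref{thm:3n}-type information to a suitable subset, or to use the contrapositive of Theorem \ref{2'}: since $\mathcal X$ is essentially $\kappa$-dependent, no irreducible (or all-components-nonempty) curve $\sigma_m$ of degree $m\le n+2$ can carry $\mathcal X$ as an $n$-complete set of $\le mn-1$ points. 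I would extract from essential $\kappa$-dependence that some curve of low degree — say a line $\sigma_1$ — contains an excess of points (more than $\kappa+2-(\text{remainder})$), then set $\mathcal Y=\mathcal X\setminus\sigma_1$ and apply Lemma \ref{red} to conclude $\mathcal Y$ is essentially $(\kappa-1)$-dependent, reducing $m$ (or $n$) by one and invoking the inductive hypothesis on $\mathcal Y$.

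The dichotomy ``degree $m$ or degree $n-3$'' is what makes the bookkeeping delicate, and I expect this to be the main obstacle. The two alternatives correspond to the two essentially different ways a complete-intersection-like configuration can degenerate: either the points genuinely sit on a curve of the lower degree $m$ (the ``expected'' geometry), or they collapse onto a curve of degree $n-3$, which is the Cayley--Bacharach ``residual'' degree complementary to $\kappa$ inside a would-be intersection pattern. Concretely, when I peel off components and track how the dependence order drops against the count of remaining points, I will repeatedly land in the boundary case of Theorem \ref{thm:3n}(c) or of Corollary \ref{cor:3n-1}, and I must argue that the low-degree curves assembled from the peeled components either union up into a single curve of degree $m$, or, when they fail to do so, force all of $\mathcal X$ onto the complementary curve of degree $n-3$ via Lemma \ref{curve} (a curve of degree $n-3$ through $\le\frac12(n-3)(n)$ points) combined with Noether-type (Theorem \ref{N}) or Bezout arguments to rule out spurious common components.

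The cleanest route for the inductive core is therefore the following. Assume the claim for all smaller values of $m$. Using that $\mathcal X$ is essentially $\kappa$-dependent, I would first find, by the above peeling, a curve $\sigma$ of degree $m-1$ (or smaller) containing all but at most $n$ points of $\mathcal X$; Lemma \ref{red} then makes the residual set $\mathcal Y=\mathcal X\setminus\sigma$ essentially $(\kappa-(m-1))=(n-2)$-dependent with $\#\mathcal Y\le n$, so Corollary \ref{cor:2n+1} forces $\mathcal Y$ to lie on a line $\sigma_1$. If $\sigma_1$ can be absorbed into a degree-$m$ curve together with $\sigma$ (i.e.\ $\deg\sigma+1\le m$), I am done with the first alternative; otherwise the degree count must be balanced on the other side, and a Lemma \ref{curve} construction of a degree-$(n-3)$ curve through the full set, validated against Theorem \ref{2'} and Bezout, delivers the second alternative. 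The hard part is verifying that one of these two absorptions always succeeds under the single hypothesis $\#\mathcal X\le mn$, which is where the precise arithmetic of $\kappa=m+n-3$ and the counts $mn,\ m(\kappa-m+3),\ d(k,n)$ must be matched exactly.
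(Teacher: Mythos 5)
There is a genuine gap in your plan, and it sits exactly at the point where your induction would have to start. Your inductive step requires extracting, from the essential $\kappa$-dependence of $\mathcal X$, some low-degree curve (a line, say) containing ``an excess of points,'' which you would then peel off via Lemma \ref{red}. But none of the tools you invoke can produce such a curve: Corollary \ref{cor:2n+1}, Corollary \ref{cor:3n-1} and Theorem \ref{thm:3n} characterize $k$-dependence only for sets of at most $3k$ points, while here $\#\mathcal X=mn$ and $\kappa=m+n-3$, so that $mn-3\kappa=(m-3)(n-3)>0$ whenever $m\ge 4$; hence for every case not already settled ($m\ge 4$) these theorems simply do not apply to $\mathcal X$. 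Note also that essential $\kappa$-dependence does not pass to subsets, so you cannot apply them ``to a suitable subset'' either; Lemma \ref{red} only helps \emph{after} a curve has been found. Producing a low-degree curve through many points of an essentially $\kappa$-dependent set is precisely what Proposition \ref{3} (and then Proposition \ref{4}) is supposed to establish, so your starting point is circular. Beyond that, you yourself concede that the dichotomy ``absorb into a curve of degree $m$ or collapse onto a curve of degree $n-3$'' is left unverified --- but that dichotomy \emph{is} the statement; the proposal never exhibits the degree-$(n-3)$ curve concretely.

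For contrast, the paper's proof is direct, with no induction and no peeling. Assume $n\ge 4$ (the cases $n\le 3$ are evident) and suppose no curve of degree $m$ contains $\mathcal X$. Then $\mathcal P_{m,\mathcal X}=\{0\}$, so by Proposition \ref{cor:ind4} the set $\mathcal X$ contains an $m$-poised subset $\mathcal Y$ with $\#\mathcal Y=\dim\Pi_m=\frac12 m(m+3)+1$. The complement $\mathcal Z=\mathcal X\setminus\mathcal Y$ satisfies $\#\mathcal Z\le mn-\frac12 m(m+3)-1$, and the identity $mn-\frac12 m(m+3)-\frac12 n(n-3)=-\frac12(n-m)(n-m-3)\le 1$ shows $\#\mathcal Z\le \frac12 n(n-3)=\dim\Pi_{n-3}-1$. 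By Lemma \ref{curve} there is a curve $\sigma_{n-3}$ of degree $n-3$ through all of $\mathcal Z$. Finally, if some $A\in\mathcal X\setminus\sigma_{n-3}$ existed, then $A\in\mathcal Y$ and the product $\sigma_{n-3}\,p^\star_{A,\mathcal Y}$ would be a polynomial of degree $(n-3)+m=\kappa$ vanishing on $\mathcal X\setminus\{A\}$ but not at $A$, contradicting essential $\kappa$-dependence; hence $\mathcal X\subset\sigma_{n-3}$. The two ideas you were missing are exactly these: negating one horn of the dichotomy to obtain an $m$-poised subset, and the product-of-fundamental-polynomial trick that forces the \emph{whole} set onto the residual curve of degree $n-3$.
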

\begin{proof}
The cases $n=1,2,3,$ are evident. Thus assume that $n\ge 4.$ Suppose conversely that there is no curve of
degree $m$ containing all of $\mathcal X.$ Then there is an $m$-poised subset $\mathcal Y \subset \mathcal X$ of $(1/2)m(m+3)+1$ points.

Set $\mathcal Z= \mathcal X \setminus \mathcal Y.$
Next we are going to show that
\begin{equation} \label{aa}\#\mathcal Z\le \dim\Pi_{n-3}-1.\end{equation}

We have that
$\nu:=\#\mathcal Z- \dim\Pi_{n-3}+1\le mn-(1/2)m(m+3)-1-(1/2)n(n-3)-1= (1/2)m(2n-m-3)-1(1/2)m(2n-m-3)-(1/2)n(n-3)-1=-(1/2)(n-m-3)(n-m)-1.$

Now, evidently $\nu< 0$ if $n=m$ or $n\ge m+3.$ While $\nu = 0$ if $n=m+1$ or $n=m+2.$
Thus \eqref{aa} is proved.

By Lemma \ref{curve} there is a curve $\sigma_{n-3}$ of degree $n-3$ passing through all the points of $\mathcal Z.$
We claim that $\mathcal X\subset \sigma_{n-3}.$ Suppose by contradiction that there is a point $A\in \mathcal X\setminus \sigma_{n-3}.$
Recall that the set $\mathcal Y$ is $m$-poised and consider the $m$-fundamental polynomial $p^\star_{A, \mathcal Y}.$
Now, notice that $p:=\sigma_{n-3}p^\star_{A, \mathcal Y}$ is a $\kappa$-fundamental polynomial of the point $A$ in the set $\mathcal X,$ which is a contradiction.
\end{proof}

\begin{proposition} \label{4}  Suppose that $m\le n.$ If a set $\mathcal X$ of $mn$ points is essentially $\kappa$-dependent then all the points of $\mathcal X$ lay in a curve of
degree $m.$
\end{proposition}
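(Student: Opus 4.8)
The plan is to prove that $\mathcal{X}$ lies in a curve of degree $m$ by a descent on the \emph{minimal} degree of a curve through $\mathcal{X}$, combining Proposition \ref{3} (to produce curves) with Theorem \ref{2'} (to rule out the surviving degrees). Let $D$ denote the least degree of a plane curve containing $\mathcal{X}$. It suffices to show $D\le m$, since a curve of degree $\le m$ through $\mathcal{X}$ may be multiplied by an arbitrary polynomial of degree $m-D$ to give a curve of degree exactly $m$ through $\mathcal{X}$. So I assume, for contradiction, that $D>m$.

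First I would record two easy facts. Since $D$ is minimal, a minimal curve $\rho_D$ through $\mathcal{X}$ has no component free of points of $\mathcal{X}$ (an empty component could be deleted, lowering the degree); hence all irreducible components of $\rho_D$ are not empty with respect to $\mathcal{X}$, so Theorem \ref{2'} is applicable to $\rho_D$. Next, by Proposition \ref{3} the set $\mathcal{X}$ lies in a curve of degree $m$ or of degree $n-3$; the first alternative is excluded by $D>m$, so $\mathcal{X}$ lies in a curve of degree $n-3$ and therefore $m<D\le n-3$ (in particular $n\ge m+4$). Writing $f(x)=x(m+n-x)$, concavity together with $f(m)=f(n)=mn$ gives $f(D)>mn$ for $m<D<n$; as $D\le n-3<n$ this yields $\#\mathcal{X}=mn\le D(m+n-D)-1$, which is exactly the cardinality bound of Theorem \ref{2'} for a curve of degree $D$ with complementary parameter $\nu:=m+n-D$ (note $D+\nu-3=\kappa$).

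Now I would split on the size of $D$. If $2D\le m+n+2$, then the hypothesis ``$D\le\nu+2$'' of Theorem \ref{2'} holds, and the theorem forces $\mathcal{X}$ to be not essentially $\kappa$-dependent, contradicting the assumption. It remains to treat $2D>m+n+2$. Here I would apply Proposition \ref{3} a second time, but to the more balanced pair $(a,b):=(m+n-D-2,\,D+2)$, which satisfies $a+b=m+n$ (so the associated dependence degree is again $\kappa$), $a\le b$, and $a\ge m$ (because $D\le n-3$), whence $ab\ge mn=\#\mathcal{X}$. Proposition \ref{3} then places $\mathcal{X}$ in a curve of degree $a=m+n-D-2$ or of degree $b-3=D-1$; since $2D>m+n+2$ forces $a\le D-2$, both alternatives produce a curve of degree $<D$ through $\mathcal{X}$, contradicting the minimality of $D$. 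In either branch we reach a contradiction, so $D\le m$, completing the proof.

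The main obstacle is precisely the regime $2D>m+n+2$ (equivalently, $n$ much larger than $m$), where the curve of degree $D$ supplied by the first use of Proposition \ref{3} is too large for Theorem \ref{2'} to apply directly; the descent via the balanced pair $(a,b)$ is what pulls the degree back into the range governed by Theorem \ref{2'}. In a full write-up I would verify the boundary arithmetic carefully (the inequalities $a\ge m$, $a\le D-2$, and $D<n$, together with $a\ge 1$ and $b-3\ge 1$) and note that the degenerate small values $D\le 2$ fall under the ``evident'' cases of Theorem \ref{2'}.
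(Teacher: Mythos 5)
Your proof is correct, and it rests on the same two pillars as the paper's argument: Proposition \ref{3} applied to a pair $(a,b)$ with $a+b=m+n$ (so the dependence degree $\kappa$ is unchanged) combined with concavity of $x\mapsto x(m+n-x)$, and Theorem \ref{2'} applied to a curve of minimal degree through $\mathcal{X}$, whose components are forced to be non-empty by minimality. The organizational difference is real, though: the paper applies Proposition \ref{3} exactly once, at the balanced degree $m'=\lfloor (m+n)/2\rfloor$; since both alternatives of its conclusion then give a curve of degree at most $m'$, the minimal degree $m_0$ automatically satisfies $m_0\le (m+n)/2$, so Theorem \ref{2} (irreducible case) or Theorem \ref{2'} (reducible case) applies at once, with no case analysis. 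You instead apply Proposition \ref{3} at the original pair $(m,n)$, which only bounds the minimal degree by $D\le n-3$, and you must then split: the regime $2D\le m+n+2$ is finished by Theorem \ref{2'}, while the regime $2D>m+n+2$ requires a second application of Proposition \ref{3}, at the shifted pair $(m+n-D-2,\,D+2)$, to manufacture a curve of degree $<D$ and contradict minimality. Both routes are sound; the paper's up-front balancing is more economical, while your second branch is a nice illustration of Proposition \ref{3} as a descent device, and your arithmetic (the bounds $a\ge m$, $a\le D-2$, $m<D<n$, and the identity $D+\nu-3=\kappa$) checks out.

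One wording repair is needed where you pass from minimality to the hypothesis of Theorem \ref{2'}. The paper's notion of a component being \emph{not empty} with respect to $\mathcal{X}$ is stronger than ``contains a point of $\mathcal{X}$'': it requires a point of $\mathcal{X}$ lying on that component and on no other component. So the inference ``no component is free of points of $\mathcal{X}$, hence all components are not empty'' is, as literally written, a non sequitur. Fortunately your deletion argument proves the stronger statement directly: if $\mathcal{X}_i=\emptyset$ in the paper's sense, then $\mathcal{X}$ is contained in the union of the remaining components, which is a curve of degree $<D$, contradicting minimality of $D$. With that sentence restated, the proof stands.
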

\begin{proof}
Assume by the way of contradiction that $\mathcal X$ does not lay in a curve of
degree $m.$

First let us prove that there is a number $m_0>m$ such that

1) $m_0\le \frac{\kappa+3}{2},$  i.e., $m_0\le n_0:=\kappa+3-m_0,$

2) all the points of $\mathcal X$ lay in a curve of
degree $m_0,$

3) no curve of degree less than $m_0$ contains all of $\mathcal X.$

To this end let us apply Theorem \ref{3} for $\mathcal X$  and $m=m'=[\frac{\kappa+3}{2}].$
If $m'=\frac{\kappa+3}{2}$ then we get that $\mathcal X$ lies in a curve $\sigma_{m'}$ of degree $m'$ or in a curve $\sigma_{n'-3}$ of degree $n'-3,$
 where $n':=\kappa+3-m'=m',$ and conclude that  $\mathcal X$ lies in a curve $\sigma_{m'}.$

If $m'=\frac{\kappa+3}{2}-\frac{1}{2}$ then we get that $\mathcal X$ lies in a curve $\sigma_{m'}$ of degree $m'$ or in a curve $\sigma_{n'-3}$ of degree $n'-3=m'-2,$
and again conclude that  $\mathcal X$ lies in a curve $\sigma_{m'}.$

In both cases $m'\le \frac{\kappa+3}{2},$ so we have that $\mathcal X$ lies in a curve $\sigma_{m'},$ where $m'\le n'.$

Now denote by $m_0$ the minimal possible $m'$ with above described property and $\sigma_{m_0}$ be the corresponding curve of degree $m_0.$
Then it is easily seen that $m_0>m$ and the above conditions 1), 2) and 3)
are satisfied.

Let us verify that $mn\le m_0n_0-1.$ For this end consider the parabola $y=x(\kappa+3-x).$
Now it is easily seen that
\begin{equation}\label{aaa}mn=m(\kappa-m+3)< m_0(\kappa-m_0+3),
\end{equation}
since we have $y(m)=y(n)$ and $m<m_0<n.$

Next, suppose first that the curve $\sigma_{m_0}$ is irreducible. In view of \eqref{aaa} we conclude from Theorem \ref{2} that the set $\mathcal X$ is $\kappa$- independent, which is a contradiction. Note that here $m_0\le n_0.$

Finally, suppose that  $\sigma_{m_0}$ is a reducible curve: $\sigma_{m_0}=\sigma_{m_1}\cdots \sigma_{m_s},$ where the component $\sigma_{m_i}$ has degree $m_i,$ and is irreducible.
In view of the above condition 3) no component is empty with respect to the point set $\mathcal X.$
Now by Theorem \ref{2'} we get that $\mathcal X$ is not essentially $\kappa$-dependent, which is a contradiction.
\end{proof}

\begin{remark} \label{5} Suppose that $m\le n$ and a set $\mathcal X$ of $mn$ points is essentially $\kappa$-dependent.  Suppose also that no curve of degree less than $m$ contains all of $\mathcal X.$ Let $\sigma_m$ be the curve of degree $m$ from Proposition \ref{4} containing all of $\mathcal X.$ Then if the curve is reducible: $\sigma_m=\sigma_{m_1}\cdots \sigma_{m_s},$ where each component $\sigma_{m_i}$ has degree $m_i$ and is irreducible, then no point of $\mathcal X$ is an intersection point of the components and each component $\sigma_{m_i}$ contains exactly $m_i(\kappa-m+3)$ points of $\mathcal X$ which are essentially $(\kappa-m+m_i)$-dependent.
\end{remark}
 Indeed, the proof coincides with the last paragraph of the proof of Proposition \ref{4}, with $m_0$ replaced by $m.$

\begin{theorem}\label{6}
Given a set $\mathcal X, \#{\mathcal X}=mn,\ m\le n,$ satisfying the following conditions:\\
a) Any plane curve of degree $\kappa=m+n-3$ containing all but one point of $\mathcal X$, contains
all of $\mathcal X,$\\
b) No curve of degree less than $m$ contains all of $\mathcal X.$\\
Then $\mathcal X$ is the set of intersection points of some two plain curves $\sigma_m$ and $\sigma_n$
of degrees $m$ and $n,$ respectively:
\begin{equation}\label{bb}\mathcal X=  \sigma_m\cap\sigma_n.\end{equation}
\end{theorem}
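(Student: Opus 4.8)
The plan is to exhibit the two curves explicitly: a degree-$m$ curve $\sigma_m$ handed to us by Proposition \ref{4}, and a degree-$n$ curve $\sigma_n$ squeezed out of the failure of $n$-completeness. Since the cases $m\le 2$ are settled in the previous subsection, I would assume $m\ge 3$ (and recall $m\le n$, so $m\le n+2$ automatically). First I would read condition a) as the statement that $\mathcal X$ is essentially $\kappa$-dependent. Because $\#\mathcal X=mn$ and $m\le n$, Proposition \ref{4} then gives a curve $\sigma_m$ of degree $m$ containing all of $\mathcal X$. Condition b) forces $\sigma_m$ to be squarefree of degree exactly $m$: a repeated irreducible factor could be deleted to produce a curve of degree $<m$ with the same zero set, still containing $\mathcal X$, contradicting b). This squarefreeness is precisely what later licenses speaking of the distinct irreducible components of $\sigma_m$ and invoking Remark \ref{5}.

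Next I would manufacture $\sigma_n$. Applying the contrapositive of Proposition \ref{1''} (legitimate since $3\le m\le n+2$) to the essentially $\kappa$-dependent set $\mathcal X\subset\sigma_m$, I conclude that $\mathcal X$ is \emph{not} $n$-complete in $\sigma_m$. By the definition of completeness this furnishes a polynomial $\sigma_n\in\Pi_n$ vanishing on $\mathcal X$ but not of the form $q\sigma_m$; equivalently $\sigma_m\nmid\sigma_n$. As $\mathcal X\subset\sigma_m$ and $\sigma_n$ vanishes on $\mathcal X$, we have $\mathcal X\subset\sigma_m\cap\sigma_n$, so the whole task reduces to upgrading this inclusion to an equality. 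For that I would prove that $\sigma_m$ and $\sigma_n$ have no common component and then close with Bezout.

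Ruling out a common component is the heart of the matter and the step I expect to be the main obstacle, chiefly when $\sigma_m$ is reducible. Let $\tau$ be the product of all common irreducible components of $\sigma_m$ and $\sigma_n$, of degree $d$, and write $\sigma_n=\tau\rho$ with $\rho\in\Pi_{n-d}$, so that $\rho$ and $\sigma_m/\tau$ share no component. Since $\sigma_m\nmid\sigma_n$ we have $\tau\ne\sigma_m$, hence $\sigma_m$ has an irreducible component $\sigma_{m_j}$, say of degree $m_j$, that is not a component of $\tau$ and therefore not a component of $\rho$. By Remark \ref{5}, $\sigma_{m_j}$ carries exactly $m_j(\kappa-m+3)=m_jn$ points of $\mathcal X$, none of which lies on any other component of $\sigma_m$ and in particular none on $\tau$; consequently all $m_jn$ of them lie on $\rho$. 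If $d\ge 1$, then $\sigma_{m_j}$ (irreducible, degree $m_j$) and $\rho$ (degree $n-d$) have no common component, so by Bezout their common zeros number at most $m_j(n-d)<m_jn$, contradicting the $m_jn$ points just exhibited. Hence $d=0$, i.e. $\sigma_m$ and $\sigma_n$ have no common component. (When $\sigma_m$ is irreducible this is immediate, as the only candidate shared component would be $\sigma_m$ itself, already excluded by $\sigma_m\nmid\sigma_n$.)

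Finally, with no common component, Bezout's theorem asserts that the total number of intersection points of $\sigma_m$ and $\sigma_n$, counted with multiplicity, equals $mn$. Since $\mathcal X\subset\sigma_m\cap\sigma_n$ consists of $mn$ distinct points, each contributing at least $1$ to this sum, there can be no further intersection points and every intersection multiplicity must equal $1$. Therefore $\mathcal X=\sigma_m\cap\sigma_n$, which is exactly \eqref{bb}, and the sufficiency direction of Theorem \ref{main} is complete.
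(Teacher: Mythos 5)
Your proof is correct, and it shares the paper's skeleton: like the paper, you obtain $\sigma_m$ from Proposition~\ref{4}, deduce from Proposition~\ref{1''} that $\mathcal X$ is not $n$-complete in $\sigma_m$, extract $\sigma_n$ from the failure of completeness, and close with Bezout (your remark that condition b) forces $\sigma_m$ to be squarefree is consistent with the paper's convention of identifying curves with polynomials without multiple factors). Where you genuinely diverge is the key step of ruling out a common component. The paper argues against condition a): writing $\sigma_m=\sigma_l\sigma_{m-l}$, $\sigma_n=\sigma_l\sigma_{n-l}$ with common part $\sigma_l$, it sets $\mathcal Y=\mathcal X\cap\sigma_{m-l}\cap\sigma_{n-l}$ (nonempty by b)), uses the Cayley--Bacharach theorem to produce a fundamental polynomial $p^\star_{A,\mathcal Y}$ of degree $m+n-2l-2$ for a point $A\in\mathcal Y$, and notes that $\sigma_l p^\star_{A,\mathcal Y}$ is then a fundamental polynomial of $A$ in $\mathcal X$ of degree $m+n-l-2\le\kappa$, contradicting essential $\kappa$-dependence; this needs Cayley--Bacharach for a \emph{subset} of an intersection that may have fewer than $(m-l)(n-l)$ points (the generalized corollary appended to the paper), and tacitly requires choosing $A\notin\sigma_l$. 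You instead use Remark~\ref{5}: a component $\sigma_{m_j}$ of $\sigma_m$ not dividing the common part $\tau$ carries exactly $m_jn$ points of $\mathcal X$, none on the other components and hence none on $\tau$, so all lie on $\rho=\sigma_n/\tau$; Bezout then allows $\sigma_{m_j}$ and $\rho$ at most $m_j(n-d)<m_jn$ common points once $d=\deg\tau\ge 1$, a purely numerical contradiction. Your route buys independence from the generalized Cayley--Bacharach statement (and from condition a) at this stage), at the price of leaning on Remark~\ref{5}, whose justification in the paper is itself only sketched by reference to the proof of Proposition~\ref{4}. Your explicit final step --- $mn$ distinct points inside an intersection whose Bezout bound is $mn$ forces equality --- also spells out what the paper leaves implicit.
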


\begin{proof}  The cases $m=1,2,3$ were considered earlier. Hence, suppose that $m\ge 4.$ We have from Proposition \ref{4} that  all the points of $\mathcal X$ lay in a curve $\sigma_m$ of
degree $m.$ Then we get from Proposition \ref{1''} that the set $\mathcal X$ is not $n$-complete in $\sigma_m.$

Thus  the set $\mathcal X$ is not $n$-complete on $\sigma_m.$ Therefore there exists a curve $\sigma_n$ of degree $n$ which vanishes on all the points of $\mathcal X$ but
does not vanish on whole curve $\sigma_m.$ It only remains to show that the curves $\sigma_m$ and $\sigma_n$ do not have a common component.
Suppose by way of contradiction that
$$\sigma_m= \sigma_l\sigma_{m-l} \ \hbox{and}\ \sigma_n= \sigma_l\sigma_{n-l},$$
where $\sigma_i$ has degree $i$ and the curves $\sigma_{m-l}, \sigma_{n-l}$ have no common component.

Denote $\mathcal Y:=\sigma_{m-l}\cap\sigma_{m-l}\cap\mathcal X.$
In view of the condition b) we have that $\mathcal Y\neq \emptyset.$ Let $A\in\mathcal Y.$
By the Cayley-Bacharach theorem we have that $A$ has a fundamental polynomial $p^\star_{A,\mathcal Y}$ of degree $m+n-2l-2$ in the set $\mathcal Y.$
Now notice that the polynomial $p=\sigma_lp^\star_{A,\mathcal Y}$ of degree $m+n-l-2\le m+n-3$ is a fundamental polynomial of $A$ in the set $\mathcal X,$
which contradicts the condition a). Therefore \eqref{bb} is proved.
\end{proof}
Now we get from Theorems \ref{main} and \ref{CB} the following
\begin{corollary}\label{7}
Given a set $\mathcal X, \#{\mathcal X}=mn,\ m\le n,$ satisfying the following conditions:\\
a) The set $\mathcal X$ is essentially $\kappa$-dependent,\\
b) The set $\mathcal X$ contains an $(m-1)$-poised subset.\\
Then for any point $A\in \mathcal X$ the point set $\mathcal X\setminus \{A\}$ is $\kappa$-independent.
\end{corollary}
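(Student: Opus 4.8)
The plan is to observe that this corollary is not really a new result but a direct splicing-together of Theorem~\ref{main} and the Cayley-Bacharach Theorem~\ref{CB}, as the phrase preceding it already signals. The two hypotheses a) and b) here are exactly the hypotheses a) and b) of Theorem~\ref{main}, written in their intrinsic (point-set) form rather than in terms of containing curves; once this identification is made, the conclusion is literally part c) of Theorem~\ref{CB}. So the entire task reduces to checking that the hypotheses match, and then quoting the two theorems in sequence.

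First I would translate hypothesis a). By the definition of essential dependence and the reformulation recorded in the text, a set $\mathcal X$ is essentially $\kappa$-dependent precisely when every plane curve of degree $\kappa$ that contains all but one point of $\mathcal X$ contains all of $\mathcal X$. This is verbatim condition a) of Theorem~\ref{main}, so no argument beyond invoking the definition is needed. Next I would translate hypothesis b), namely that ``$\mathcal X$ contains an $(m-1)$-poised subset'' coincides with ``no curve of degree less than $m$ contains all of $\mathcal X$''; this equivalence is exactly the one stated in the remark following Theorem~\ref{main}. Should a self-contained justification be desired, I would note that multiplying a containing curve of degree $d<m$ by a suitable power of a generic line yields a containing curve of degree exactly $m-1$, so the absence of a containing curve of degree $<m$ is the same as injectivity of the evaluation map $\Pi_{m-1}\to\mathbb{C}^{\#{\mathcal X}}$, which in turn is equivalent to the existence of $\dim\Pi_{m-1}$ points of $\mathcal X$ with linearly independent evaluation functionals, i.e. to an $(m-1)$-poised subset.

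With both hypotheses matched to those of Theorem~\ref{main}, I would apply that theorem to conclude that $\mathcal X=\sigma_m\cap\sigma_n$ for some plane curves $\sigma_m,\sigma_n$ of degrees $m$ and $n$. The desired assertion then follows immediately from part c) of Theorem~\ref{CB}, which states precisely that for such an intersection set $\mathcal X$ and every point $A\in\mathcal X$, the set $\mathcal X\setminus\{A\}$ is $\kappa$-independent.

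I do not expect a genuine obstacle: the proof is essentially bookkeeping. The only step demanding care is the hypothesis translation of the previous paragraphs, and in particular the equivalence in b) between an $(m-1)$-poised subset and the non-existence of a low-degree containing curve. Since both equivalences are already supplied in the body of the paper, the corollary is obtained by simply chaining Theorem~\ref{main} into Theorem~\ref{CB}.
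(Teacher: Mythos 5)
Your proposal is correct and takes essentially the same route as the paper: the paper's entire proof is the sentence preceding the corollary, namely that the result follows by chaining Theorem \ref{main} (whose conditions a) and b) are identified with the present hypotheses in the remark following that theorem) into part c) of the Cayley--Bacharach Theorem \ref{CB}. Your extra linear-algebra justification of the equivalence between containing an $(m-1)$-poised subset and the non-existence of a containing curve of degree less than $m$ is a sound elaboration of what the paper asserts without proof.
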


{\noindent H. Hakopian, D. Voskanyan\\
Department of Informatics and Applied Mathematics\\
Yerevan State University\\
A. Manukyan St. 1\\
0025 Yerevan, Armenia \\}

\noindent E-mails - \texttt{hakop@ysu.am}, \quad
\texttt{ysudav@gmail.com}
\end{document}